\documentclass{amsart}
\usepackage{graphicx} 
\usepackage{hyperref}
\usepackage{amsfonts,amsmath,amssymb}
\usepackage{mathrsfs}
\usepackage{amsthm}
\usepackage{thmtools}
\usepackage{comment}
\usepackage{mathtools, nccmath}
\usepackage{tikz}
\usepackage{cleveref}
\usepackage{soul}

\usepackage{xcolor}

\newcommand{\bbR}{\mathbb{R}}
\newcommand{\bbP}{\mathbb{P}}

\newcommand{\bbZ}{\mathbb{Z}}
\newcommand{\bbN}{\mathbb{N}}

\newcommand{\PP}{\mathbb{P}}

\newcommand{\G}{\mathcal{G}}
\newcommand{\W}{\mathcal{W}}

\newcommand{\ind}[1]{\mathbf{1}_{\{#1\}}}

\newtheorem{proposition}{Proposition}[section]
\newtheorem{theorem}[proposition]{Theorem}
\newtheorem{lemma}[proposition]{Lemma}

\newtheorem{question}[proposition]{Question}

\theoremstyle{definition}
\newtheorem{definition}[proposition]{Definition}
\newtheorem{remark}[proposition]{Remark}

\usepackage[backend=biber, sorting=nyt, citestyle=alphabetic, bibstyle=alphabetic, isbn=false, doi=true, url=false, maxnames=7]{biblatex}
\addbibresource{references.bib}

\title{A new bound for the critical point of the FK model for $q<1$}
\author{Vincent Beffara \and Corentin Faipeur \and Tejas Oke}
\address{Institut Fourier}
\email{vincent.beffara@univ-grenoble-alpes.fr}
\address{ENS Lyon}
\email{corentin.faipeur@ens-lyon.fr}
\address{California Institute of Technology, Pasadena}
\email{tejas@caltech.edu}
\date{\today}

\begin{document}
	
	\begin{abstract}
		We consider the random cluster model with parameter $q<1$, for which the FKG inequalities are not valid. On the square lattice, stochastic comparison with Bernoulli percolation implies that the model is subcritical (respectively supercritical) when $p \leq q/(1+q)$ (resp. $p \geq 1/2$); in this paper, we extend these two regions, by improving the classical stochastic comparisons.
		Assuming the existence of the critical point, this reduces its possible range.
		
		The proof relies on a modification of the usual Glauber dynamics of the model, which enables stochastic bounds of FK measures between two inhomegenous percolations.
		We also prove uniqueness of the infinite-volume measure in our extended ranges.
		Most of our results are valid in any dimension $d \geq 2$ and beyond hypercubic lattices.
	\end{abstract}
	
	\maketitle

	\begin{figure}[h!]
		\centering
		\includegraphics[width=1\linewidth]{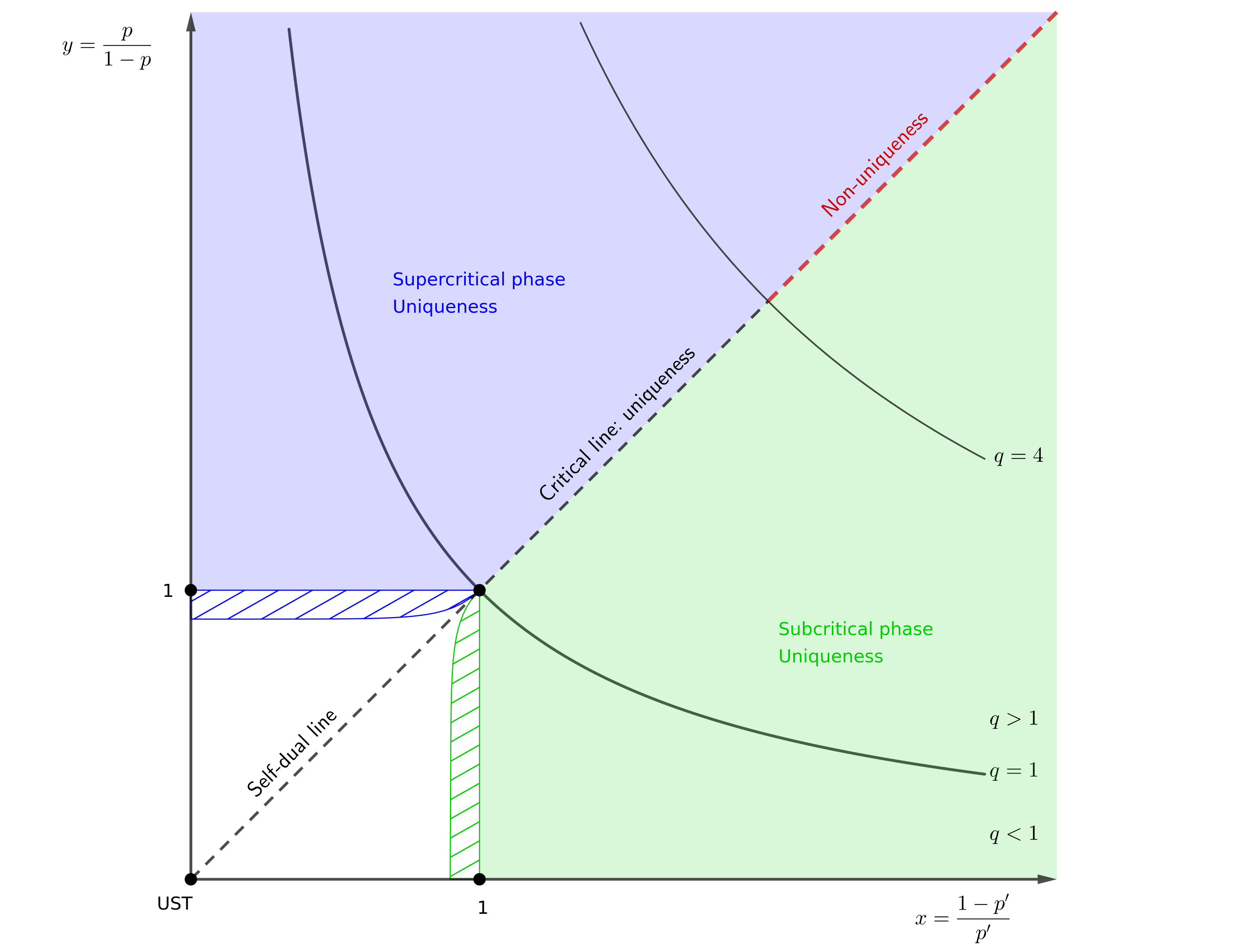}
		\caption{Phase diagram of the FK-percolation model on $\mathbf{Z}^2$.}
		\label{fig:phase_diag_Z2}
	\end{figure}
	
	\tableofcontents
	\addtocontents{toc}{\protect\setcounter{tocdepth}{1}}
	
	\section{Introduction and main results}
	\subsection{Introduction}
	
	The Random Cluster Model or FK-percolation, ‘FK’ for Fortuin-Kasteleyn, is a statistical mechanics model that was introduced in the early 1970s as a unifying graphical representation of percolation, the Ising model, and the $q$-state Potts model (see \cite{FortuinKasteleyn}, \cite{grimmett2006random}). On a finite graph $G$ it is defined to produce a random subgraph, where we put weights on the included edges and the number of connected clusters formed. It is characterized by two parameters $p$ and $q$, where $p$ is the `edge weight' and $q$ is the `cluster weight'.

	More specifically, the random cluster model on a finite graph is a probability measure on `edge configurations' (characterizing the subset of edges we include in our random subgraph, also called open edges, and the remaining edges called closed). The measure of a configuration is proportional to
	$$p^{\#\text{open edges}}\times(1-p)^{\#\text{closed edges}}\times q^{\#\text{open clusters}},$$
	where an open cluster (or simply a cluster) is a maximal connected subgraph formed by open edges. It is `similar' to percolation except that there is a weight on the number of clusters, making it a dependent percolation model. The case $q=1$ is exactly the classical Bernoulli percolation.
	
	One of the central concepts studied in statistical mechanics is that of \textit{phase transition}. During a phase transition of a given system, certain properties of the system change as a result of the change in external conditions. We are interested in studying the phase diagram of the Random Cluster Model on $\bbZ^d$ for $d\geq1$ (although, as we shall see, many of the results we prove carry over to a wider class of graphs).
	When we study the random cluster model on a box (a finite subgraph of $\bbZ^d$), these external conditions correspond to the connectivity properties of the boundary (which might affect the number of open clusters which are weighted by the FK measure).

	The model cannot be defined directly on $\bbZ^d$, but we can consider weak limits of finite-dimensional measures, with some boundary condition and the boxes growing to the whole lattice. Thus, the question of phase transition boils down to identifying the set of parameters for which we have uniqueness of the limiting measure in infinite volume. Having such a uniqueness would be the same as saying that changing external conditions does not affect the (limiting) measure.
	
	From the percolation point of view, phase transition relates to the presence or not of an infinite cluster in the random subgraph formed by open edges.
	It is generally true for Bernoulli percolation ($q=1$) that for $p$ small enough, almost surely all the clusters are finite, whereas for $p$ large enough, there is almost surely an infinite cluster. More precisely, on any infinite graph $G$, we can define a critical parameter $p_c(G) \in [0,1]$ above which there is almost surely an infinite cluster for the Bernoulli percolation on $G$, but below which there is almost surely no infinite cluster. The percolation is said to be supercritical in the first case, and subcritical in the second.
	It is known that for all $d \geq 2$, $p_c(\bbZ^d) \in (0,1)$, so that there is a subcritical and a supercritical phase (in contrast, $p_c(\bbZ)=1$).
	
	For the random cluster model with $q\geq1$, the FKG inequalities enable to define an analogous critical parameter, and the exact value is known in dimension 2 (see \cref{Phase diagram of the FK model}).
	For $q<1$, the situation is less understood due to the lack of these FKG inequalities. The definition of the critical point is not rigorous and only crude bounds on its value are known, and uniqueness of the infinite-volume measure is only known for a restricted range of parameters.
	The main goal of this paper is to contribute to the study of the model in this $q<1$ regime, improving the existing bounds on the critical parameter and extending the known regime of uniqueness.
	
	\subsection{Definition of the model and comparison inequalities}
	In this section, we will give concrete definition of the model on finite graphs with boundary conditions, and see some of its properties. A complete treatment can be found in \cite{grimmett2006random}. 
	
	Let $G = (V,E)$ be a finite connected graph, and $\partial G$ a subset of vertices, called the \emph{boundary} of $G$. Intuitively, one should think of $G$ as a finite box of an infinite (connected, locally finite) graph, e.g. the $d-$dimensional lattice, $\partial G$ being the set of vertices of $G$ having at least one neighbor outside $G$.
	Let $p \in [0,1], q \in (0, \infty)$ be the two parameters of the model. A configuration $\omega$ of the random cluster model on $G$ is an element $\omega$ of $\Omega = \{0,1\}^E$. An edge $e \in E$ is said to be open if $\omega(e) = 1$ and closed otherwise. Configurations can naturally be identified with subgraphs of $G$ with the same vertex set, with edge set corresponding to open edges. The inclusion (for the subgraphs) yields a natural partial order on $\Omega$.
	
	Two vertices $x$ and $y$ are said to be connected in a certain configuration if there exists a path of open edges from $x$ to $y$.
	The connectivity properties of a configuration will depend on the \emph{boundary condition} that we impose on $G$.
	Let $\alpha \in \Pi(\partial G)$ be a partition of $\partial G$.
	Two vertices that are in the same block in $\alpha$ are said to be \emph{wired}, which intuitively means that they are connected externally without using edges from $G$. The graph $G\cup \alpha$ is obtained from $G$ by identifying wired vertices; this identification, or \emph{wiring}, can create self-loops in the graph, which are anyways not excluded from our definition.
	For any configuration $\omega$, denote by $k(\omega,\alpha)$ the number of connected components in $G \cup \alpha$ --- or equivalently, the connected components obtained when starting from those of $\omega$ and then recursively identifying any two components sharing vertices in the same block of $\alpha$; it means that the open clusters of two wired vertices only count as 1 in $k(\omega, \alpha)$.
	The \emph{random cluster measure} or \emph{FK measure} on $G$ with parameters $p$ and $q$ and boundary condition $\alpha$ is the probability measure on $\Omega$ defined by
	\begin{equation}\label{eq:FK_measure_finite_graph}
		\phi^\alpha_{G,p,q}(\omega) := \frac{1}{Z^\alpha_{G,p,q}}\left(\prod_{e \in E}p^{\omega(e)}(1-p)^{1-\omega(e)}\right)q^{k(\omega,\alpha)}, \ \omega \in \{0,1\}^E
	\end{equation}
	where $Z^\alpha_{G,p,q}$ is the normalizing constant, called the partition function.
	The case $q=1$ is the classical Bernoulli bond percolation.
	There are two extremal boundary conditions: the first, called \emph{free boundary condition}, is obtained by taking the partition $(\{x\})_{x \in \partial G}$, which means that nothing happens outside $G$; the second corresponds to the opposite situation where all vertices are wired together, i.e. we take $\{\partial G\}$ as partition, and is called the \emph{wired boundary condition}.

	\begin{remark}
		The FK measure $\phi^\alpha_{G,p,q}$ with arbitrary boundary condition $\alpha$ can always be seen as an FK measure with free boundary condition on the graph $G\cup \alpha$.
	\end{remark}
	
	An event $A$ is said to be \emph{increasing} if it is preserved by the addition of open edges, i.e. if for all $\omega\leq \omega'$, $\omega \in A$ implies $\omega' \in A$.
	For two measures $\mu, \mu'$ on $\{0,1\}^E$, say that $\mu'$ \emph{stochastically dominates} $\mu$ if for every increasing event $A$, $\mu(A) \leq \mu'(A)$; in this case, write $\mu \preceq \mu'$. Equivalently, $\mu \preceq \mu'$ if and only if there is a coupling $(\omega,\omega')$ of $\mu$ and $\mu'$ such that $\omega \leq \omega'$ almost surely.

	For every configuration $\omega \in \{0,1\}^E$ and any edge $e\in E$, denote by $\omega_{\langle e\rangle}$ the restriction of $\omega$ to $E\setminus\{e\}$;
	$K^\alpha_e$ denotes the event that the endpoints of $e$ are connected by an open path not using $e$, but possibly using an imaginary edge between two wired vertices of the boundary. Note that the event $K^\alpha_e$ is independent of the state of the edge $e$.
	From \eqref{eq:FK_measure_finite_graph}, one can deduce that for every $e \in E$,
	\begin{equation}\label{eq:conditional_probas}
		\phi^\alpha_{G,p, q}\left(\omega(e)=1 \mid \omega_{\langle e\rangle}\right)= \begin{cases}p & \text { if } \omega_{\langle e\rangle} \in K^\alpha_e \\ \frac{p}{p+q(1-p)} & \text { if } \omega_{\langle e\rangle} \notin K^\alpha_e\end{cases}.
	\end{equation}
	For simplicity of notation, we write $p':=\frac{p}{p+q(1-p)}$.
	It turns out that $\phi^\alpha_{G, p,q}$ is the unique probability measure on $\Omega$ having the above conditional probabilities.
	If $G$ is a tree and $\alpha$ is the free boundary condition, it is clear that $K^\alpha_e$ cannot be achieved, so the FK measure is simply the Bernoulli product measure with parameter $p'$.
	In the other way, if all the vertices are wired together (or if $V$ is already reduced to a singleton), $K^\alpha_e$ is always achieved and the FK measure is the Bernoulli product measure with parameter $p$.
	
	From \eqref{eq:conditional_probas}, using a standard monotone coupling, it can be seen that we have the following comparison inequalities of the random cluster measure with product measures:
	\begin{equation}\label{eq:comparison_inequalities}
		\PP^G_{\min(p,p')} \preceq \phi_{G,p,q}^\alpha \preceq \PP^G_{\max(p,p')}
	\end{equation}
	where for $s\in[0,1]$, $\PP^G_s$ denotes the Bernoulli bond percolation measure on $G$ of parameter $s$ (this is the measure on graph $G$ where we include every edge independently with probability $s$).
	Note that $p<p'$ if and only if $q<1$ (ignoring the trivial cases $p=p'=0$ and $p=p'=1$).

	More generally, for $(p_e)_{e \in E} \in [0,1]^E$, let $\PP^G_{(p_e)}$ be the product measure associated with the inhomogeneous Bernoulli percolation on $G$, where edge $e$ is open with probability $p_e$. We prove that the comparison inequalities \eqref{eq:comparison_inequalities} can be enhanced:
	\begin{theorem}[Enhanced comparison inequalities]\label{thm:enhanced_comparison_ineq}
		Let $G=(V,E)$ be a finite connected graph with boundary $\partial G$, and $\alpha \in \Pi(\partial G)$ be a boundary condition. Assume that $G \cup \alpha$ is not a tree and contains at least two vertices (after wiring). Then, for all $p\in(0,1)$ and $q\neq1$, there exist $(\varepsilon_e)_{e \in E} \in [0,1]^E$ and $(\varepsilon'_e)_{e \in E} \in [0,1]^E$, which are both non-identically 0, such that
		\[\PP^G_{\min(p,p')+\varepsilon_e} \preceq \phi_{G,p,q}^\alpha \preceq \PP^G_{\max(p,p')-\varepsilon'_e}.\]
	\end{theorem}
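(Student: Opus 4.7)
I sketch the upper bound $\phi_{G,p,q}^\alpha \preceq \PP^G_{p'-\varepsilon'_e}$ (assuming $q<1$, so $p'=\max(p,p')$); the lower bound is analogous. The classical bound $\phi_{G,p,q}^\alpha \preceq \PP^G_{p'}$ is obtained by coupling the single-edge Glauber chain for $\phi$ with Bernoulli-$p'$ updates, since the FK conditional at $e$ is at most $p'$, with equality precisely when $K_e^c$ holds. Because $G \cup \alpha$ is not a tree, there is an edge $e^*$ lying on a cycle; such an $e^*$ is not a bridge, so $K_{e^*}$ has positive probability under $\phi$ and hence the marginal $\phi(\omega(e^*)=1)$ is strictly less than $p'$. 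The goal is to upgrade this strict marginal improvement to a genuine stochastic domination by an inhomogeneous Bernoulli with $s_{e^*}<p'$, by modifying the Glauber dynamics to perform a joint update at $(e^*,e^{**})$, where $e^{**}$ is an edge adjacent to $e^*$ on the same cycle.

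\textbf{Modified Glauber and joint conditional.} Write $e^*=\{a,b\}$ and $e^{**}=\{b,c\}$. Consider the continuous-time modified Glauber dynamics: each edge $e \notin \{e^*,e^{**}\}$ is resampled at rate $1$ from the standard FK conditional, and the pair $(e^*,e^{**})$ is resampled jointly at rate $1$ from the joint conditional $\nu^{\mathrm{rest}}$ given the configuration on $E \setminus \{e^*,e^{**}\}$. The resulting chain is irreducible with $\phi_{G,p,q}^\alpha$ as its unique invariant measure. A short case analysis of $\nu^{\mathrm{rest}}$, classified by the cluster structure of $\{a,b,c\}$ in the rest, gives: (i) when $a,b,c$ all lie in distinct clusters of the rest, $\nu^{\mathrm{rest}}=\mathrm{Bern}(p')^{\otimes 2}$; (ii) in the ``cycle-closure'' configuration where $a\sim c$ via the rest but $b$ is isolated from them, $\nu^{\mathrm{rest}}$ is strictly negatively correlated, with both marginals equal to $p/[1-(1-p)^2(1-q)]$, strictly less than $p'$ when $q<1$; (iii) in the remaining cases (some or all of $\{a,b,c\}$ already merged through the rest) at least one of the two marginals is $p$ and the joint is dominated by suitable Bernoulli products. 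Case (ii) has positive probability under $\phi$ precisely because $e^*,e^{**}$ lie on a common cycle.

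\textbf{Coupling and main obstacle.} I couple the modified Glauber with a Bernoulli dynamics for $\PP^G_{(s_e)}$, taking $s_e=p'$ for $e \notin \{e^*,e^{**}\}$ and $s_{e^*}=s_{e^{**}}=p'-\varepsilon'$ for some small $\varepsilon'>0$, driven by the same Poisson clocks. Single-edge updates use the standard monotone coupling; joint updates use the Strassen coupling realizing $\nu^{\mathrm{rest}} \preceq \mathrm{Bern}(s_{e^*})\otimes\mathrm{Bern}(s_{e^{**}})$, which is valid in case (ii) and in the fully-merged sub-cases of (iii) for $\varepsilon'$ small. The main technical obstacle is that in case (i), and in the asymmetric sub-cases of (iii) where one conditional marginal still equals $p'$, no such local coupling exists for $s<p'$; one must enrich the coupling with auxiliary randomness that transfers the strict improvement accrued in the favorable configurations into a uniform improvement on the Bernoulli marginals at $(e^*,e^{**})$, while preserving both the invariant $\omega\leq\zeta$ at each step and the product structure of $\zeta \sim \PP^G_{(s_e)}$. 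Once this coupling is established, passing to the joint invariant distribution gives $\phi_{G,p,q}^\alpha \preceq \PP^G_{(s_e)}$ with $\varepsilon'_{e^*},\varepsilon'_{e^{**}}>0$ and $\varepsilon'_e=0$ for other $e$, so that $(\varepsilon'_e)_{e \in E}$ is non-identically zero.
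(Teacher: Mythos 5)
Your case analysis of the joint conditional is correct (the ``cycle-closure'' configuration does give marginals $p/[1-(1-p)^2(1-q)]<p'$ with negative correlation), and you have correctly identified the relevant local structure: an edge on a cycle whose closure of the cycle through the rest of the graph forces the favorable conditional. But the proof has a genuine gap exactly at the step you flag as ``the main technical obstacle,'' and that obstacle is the entire content of the theorem. In your case (i) the joint conditional is exactly $\mathrm{Bern}(p')^{\otimes 2}$, and no coupling can realize $\mathrm{Bern}(p')^{\otimes 2}\preceq \mathrm{Bern}(p'-\varepsilon')^{\otimes 2}$ for any $\varepsilon'>0$; so the stepwise invariant $\omega\le\zeta$ cannot be maintained whenever case (i) occurs, which it does with positive probability. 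If instead you let $\zeta$ use threshold $p'$ at $(e^*,e^{**})$ in the unfavorable cases, then the update of $\zeta$ at $(e^*,e^{**})$ depends on which case $\omega$ (or $\zeta$) is in, and the stationary law of $\zeta$ is no longer a product measure --- its state at $e^*$ becomes correlated with the edges used to certify the case. Saying that one should ``enrich the coupling with auxiliary randomness'' does not resolve either difficulty.

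The paper resolves both points with two ideas absent from your proposal. First, the favorable event is certified not on the FK configuration but on the \emph{opposite} bounding product process: for the upper bound, one checks for an open path (through auxiliary edges only) connecting the endpoints of $e^*$ in the \emph{lower} bounding process $Y_{t-}\sim\PP^G_{\min(p,p')}$; by monotonicity $Y\le X$ this forces $K^\alpha_{e^*}$ in the FK process, so the threshold $p$ is legitimate for all three processes, and the probability of the certificate is computed under an honest product measure, giving the explicit $\widetilde\varepsilon_{e^*}=|p'-p|\,\PP^G_{\min(p,p')}(\exists\text{ open auxiliary path})$. Second, and crucially, after every update of an enhanced edge the dynamics \emph{resamples all auxiliary edges} with fresh uniforms; this restores independence between the enhanced edge and the auxiliary edges it just consulted, so the stationary law of the bounding process is genuinely the inhomogeneous product measure $\PP^G_{p'-\widetilde\varepsilon_e\ind{e\in\widetilde E}}$. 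Without this resampling step your $\zeta$ cannot have product stationary law, and without the certificate living on the monotone side your thresholds cannot be transferred to the FK chain. A further, more minor, omission: the lower bound is not ``analogous'' via a cycle --- it requires the dual structure of a cutset of auxiliary edges separating the endpoints of $e^*$ (whence the hypothesis that $G\cup\alpha$ has at least two vertices after wiring), certified this time on the upper bounding process.
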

	
	\begin{remark}
		The condition $G\cup \alpha$ is not a tree is satisfied if $G$ itself is not a tree or if $\alpha$ is not the free boundary condition; the condition $G\cup \alpha$ contains two vertices amounts to the fact that $G\cup \alpha$ is not reduced to a single point with self-loops. For planar graphs, these two conditions are dual of each other.
	\end{remark}
	
	\begin{figure}[ht]
		\centering
		\begin{minipage}[t]{0.48\textwidth}
			\centering
			\includegraphics[width=\linewidth]{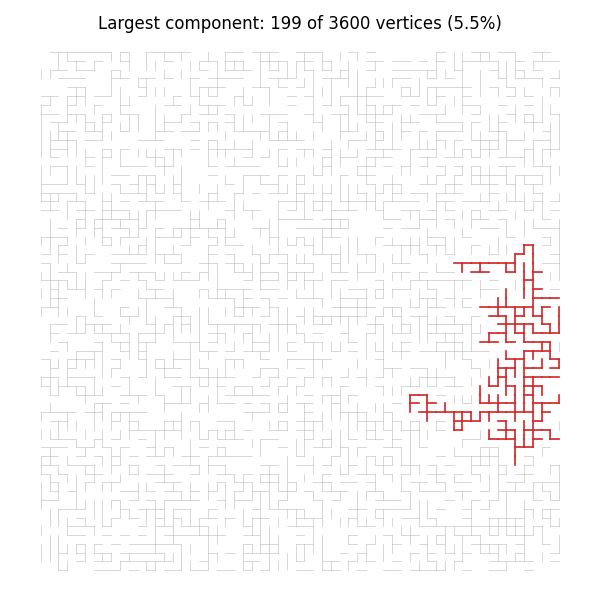} 
		\end{minipage}\hfill
		\begin{minipage}[t]{0.48\textwidth}
			\centering
			\includegraphics[width=\linewidth]{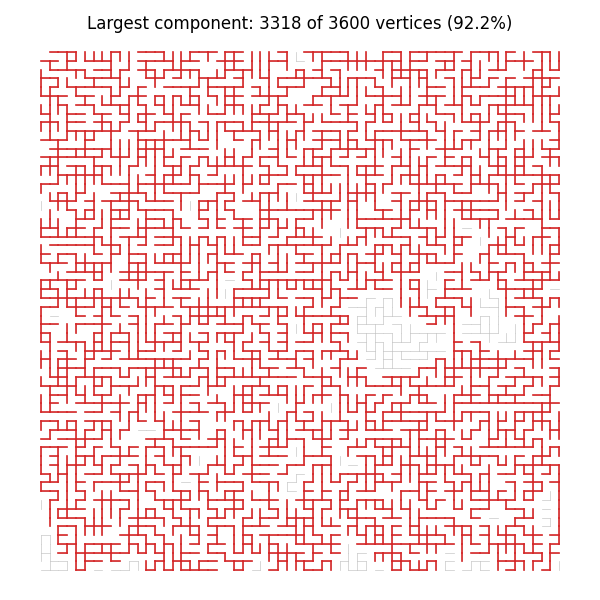}
		\end{minipage}
		\caption{Left: Simulation of percolation at $p = 0.45$.
			Right: Heat bath simulation\protect\footnotemark of FK model at same $p$, with $q = 0.15$.}
		
	\end{figure}
	\footnotetext{Reader should take this simulation with a pinch of salt because we dont have boundary FKG property to show fast mixing, so the configurations shown are obtained by running a large, fixed number of sweeps and should be read as empirical illustrations only.}

We aim to extend this result to infinite graphs, but one should notice before that the definition of the random cluster model on an infinite graph is not immediate.
If $G=(V,E)$ is an infinite, connected, locally finite graph, a way to obtain an FK measure on $G$ is by taking the so called \emph{thermodynamic limit}.
More precisely, say that a probability measure $\phi$ on $\Omega=\{0,1\}^E$
is an \emph{FK limit measure} on $G$ with parameters $p,q$ if there exist an increasing sequence of subgraphs $\Lambda_n \uparrow G$ and a sequence of partitions $\alpha_n \in \Pi(\partial \Lambda_n)$ such that
\begin{equation}\label{weak_limit}
	\phi=\lim_{n \to \infty} \phi^{\alpha_n}_{\Lambda_n,p,q}
\end{equation}
(for the weak convergence).
By compactness of $\Omega$, there always exists such an FK limit measure for any $p \in[0,1]$ and $q>0$ (say by the Prokhorov's theorem).
It is clear that \eqref{eq:comparison_inequalities} passes to the limit so that the comparison inequalities hold for any FK limit measure.
Therefore, if $\max(p,p')$ is less than $p_c(G)$ the critical parameter of the classical Bernoulli percolation on $G$, it follows that every FK limit measure is dominated by a subcritical percolation, so it is concentrated on configurations without infinite clusters. The dual statement holds for the supercritical case.

The main reason for proving an enhanced version of these comparison inequalities is to enlarge the set of parameters for which we have such a domination by a subcritical percolation.
To obtain a version of Theorem \ref{thm:enhanced_comparison_ineq} for infinite graphs, one needs to ensure that the $(\varepsilon_e)$ and $(\varepsilon_e')$ do not vanish when the size of the graph goes to infinity.
In fact, we prove more quantitative statements in Section \ref{Enhanced comparison inequalities for general finite graphs} (see Propositions \ref{prop:below_enhancement} and \ref{prop:above_enhancement}), with explicit values for $(\varepsilon_e)$ and $(\varepsilon_e')$.
Moreover, they will be positive for a positive density of edges. The theory of ``essential enhancements'' developed by Aizenman and Grimmett \cite{AizenmanGrimmett} then states that $\PP_{\max(p,p')-\varepsilon'_e}$ is still subcritical if $\max(p,p')$ is slightly above $p_c(G)$, likewise for the supercritical case. We therefore get the following result, which is illustrated by Figure \ref{fig:phase_diagram_general}:

\begin{theorem}[Extended subcritical and supercritical phases]\label{thm:new_bound_pc(q)}
	Let $G$ be an infinite, locally finite, connected, quasi-transitive graph such that $p_c(G)<1$, and assume that $G$ contains an edge which is on a cycle and on a doubly-infinite path.
	Then for all $q\neq 1$, there exists $\delta, \delta'>0$ such that:
	\begin{enumerate}
		\item If $\max(p,p')< p_c(G)+\delta'$, then there exists a subcritical percolation which dominates all the FK limit measures on $G$; in particular, for each, a.s. there is no infinite cluster.
		\item If $\min(p,p')> p_c(G) - \delta$, then there exists a supercritical percolation dominated by all the FK limit measures on $G$; in particular, for each, a.s. there is an infinite cluster.
	\end{enumerate}
\end{theorem}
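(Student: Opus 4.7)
My plan is to execute the roadmap sketched in the paragraph preceding the theorem: exhaust $G$ by finite subgraphs, apply the quantitative versions of Theorem~\ref{thm:enhanced_comparison_ineq} (namely Propositions~\ref{prop:below_enhancement} and~\ref{prop:above_enhancement}) to each of them, pass to the weak limit to dominate every FK limit measure by an inhomogeneous Bernoulli percolation, and then invoke the Aizenman--Grimmett essential enhancement theorem to push past $p_c(G)$. I will write the upper (subcritical) direction in detail; the supercritical direction is entirely symmetric.

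Fix a quasi-transitive action of a group $\Gamma$ on $G$, pick any exhaustion $\Lambda_n \uparrow G$, and let $\alpha_n \in \Pi(\partial\Lambda_n)$ be arbitrary boundary conditions. Let $\phi$ be any weak sub-sequential limit of $\phi^{\alpha_n}_{\Lambda_n,p,q}$. The explicit formulas of Proposition~\ref{prop:above_enhancement} attach to each $n$ and each edge $e \in E(\Lambda_n)$ a value $\varepsilon^{\prime(n)}_e \in [0,1]$ determined by a fixed-radius neighbourhood of $e$ in $\Lambda_n \cup \alpha_n$, with
\[
\phi^{\alpha_n}_{\Lambda_n,p,q}\;\preceq\;\PP^{\Lambda_n}_{\max(p,p')-\varepsilon^{\prime(n)}_e}.
\]
The hypothesis that $G$ has some edge $e_0$ lying on both a cycle and a doubly-infinite path, combined with quasi-transitivity, produces a $\Gamma$-orbit $\mathcal{O}$ of edges of $G$ sharing this property and having positive density. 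For every $e \in \mathcal{O}$ at bounded distance from the interior of $\Lambda_n$, the neighbourhood visible to the formula in $\Lambda_n$ agrees with the one in $G$, so $\varepsilon^{\prime(n)}_e \geq \eta' > 0$ with $\eta'$ depending only on $p$, $q$, and the local combinatorics of $G$ (and not on $n$ or $e$). Extracting a further subsequence along which $\varepsilon^{\prime(n)}_e \to \varepsilon'_e$ for every $e \in E$, the domination passes to the limit and yields $\phi \preceq \PP^G_{(q_e)}$, where $q_e := \max(p,p') - \varepsilon'_e \leq \max(p,p')$ everywhere and $q_e \leq \max(p,p')-\eta'$ on the positive-density set $E_0 \supset \mathcal{O}$.

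The essential enhancement theorem of~\cite{AizenmanGrimmett}, extended from $\bbZ^d$ to quasi-transitive graphs with $p_c(G)<1$ via the standard finite-energy plus Russo-formula machinery, then says that an independent positive-rate deletion of edges along a positive-density, $\Gamma$-invariant edge set strictly increases the Bernoulli critical parameter. Concretely, the inhomogeneous measure $\PP^G_{(q_e)}$ above remains subcritical throughout a non-trivial interval $p_c(G) \leq \max(p,p') < p_c(G)+\delta'$ for some $\delta' = \delta'(\eta',G) > 0$, which proves assertion~(1). Running the identical argument with Proposition~\ref{prop:below_enhancement} instead and the dual direction of the essential enhancement theorem yields the corresponding $\delta>0$ in assertion~(2).

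The hard part is the \emph{uniformity in $n$} of the lower bound on $\varepsilon^{\prime(n)}_e$: one must check that the explicit constants of Propositions~\ref{prop:below_enhancement} and~\ref{prop:above_enhancement} really depend only on a fixed-radius neighbourhood of $e$ (so that shrinking the distance to $\partial\Lambda_n$ is the only way to lose positivity) and that the two combinatorial assumptions—lying on a cycle and on a bi-infinite path—are stable on a $\Gamma$-invariant, positive-density collection of edges in every sufficiently large $\Lambda_n$. Once this locality and density are secured, the weak-compactness step and the invocation of~\cite{AizenmanGrimmett} are essentially formal, as is the consequence on the existence / absence of infinite clusters via the usual Borel--Cantelli and domination arguments.
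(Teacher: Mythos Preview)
Your overall strategy matches the paper's (apply Propositions~\ref{prop:below_enhancement}/\ref{prop:above_enhancement} on finite boxes with uniform constants, pass to the weak limit, invoke Aizenman--Grimmett), but there is a genuine gap in the choice of the enhanced edge set. You implicitly take it to contain a full $\Gamma$-orbit $\mathcal{O}$, yet the Propositions require that every enhanced edge $e$ have a cutset (resp.\ a connecting path between its endpoints) lying \emph{entirely among auxiliary edges}, and a full orbit typically violates this. On $\bbZ^2$, for instance, the translation orbit of an edge is all horizontal (or all vertical) edges; if $\widetilde{E}$ consists of all horizontal edges, the endpoints of a horizontal edge cannot be joined using vertical edges only, so $\Gamma_{\widetilde{E}}(e)=\emptyset$ and $\widetilde{\varepsilon}_e=0$; dually, no \emph{finite} cutset for a horizontal edge consists solely of vertical edges, so $\widehat{\varepsilon}_e=0$. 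Your claimed uniform lower bound $\eta'>0$ is therefore unjustified, and the ``locality'' you invoke in the last paragraph does not hold without first controlling which edges are enhanced.

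The paper fills this gap with an explicit greedy construction: fix $K$ so that every ball $B_K(e)$ contains a \emph{device} (a star, or a minimal-length cycle through a pivotal edge), then select pairwise \emph{disjoint} devices outward from a basepoint so that every edge of $G$ lies within bounded distance of a chosen device; exactly one pivotal edge per device goes into $\widehat{E}$ (resp.\ $\widetilde{E}$). Disjointness of the devices is precisely what guarantees the hypothesis of the Propositions, and the bounded-gap property is what the essential-enhancement theorem needs. Insisting that the enhanced edges be pivotal (on a doubly-infinite path) is what makes the enhancement \emph{essential} in the sense of~\cite{AizenmanGrimmett}; you record this hypothesis on $e_0$ but never actually use it. Finally, because the paper fixes $\widehat{E},\widetilde{E}$ and the uniform $\widehat{\varepsilon},\widetilde{\varepsilon}$ once and for all (Proposition~\ref{prop:comparison_inequalities_infinite_volume}), the \emph{same} dominating product measure works for every FK limit measure, as the theorem statement requires; your subsequence extraction would a priori yield a different dominating measure for each limit $\phi$.
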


\begin{remark}
	The assumption $p_c(G)<1$ is needed in order to apply the essential enhancements theory. Of course, it does not make sense to take $\max(p,p')$ slightly above 1, but notice that in the other direction, it is not true that taking $p$ slightly below $p_c(G)=1$ can make $\PP^G_{p+\varepsilon_e}$ supercritical (except if $\varepsilon_e>0$ for almost all $e$, which cannot be the case with our method). In fact, having $p_c(G)=1$ is a strong geometric feature of the graph that cannot be broken by local enhancements.
	We symmetrically need to have $p_c(G)>0$, but this is the case for all graphs of bounded degree (this can easily be seen by comparison with a Galton--Watson process).
\end{remark}

\begin{figure}[ht]
	\centering
	\includegraphics[width=1\linewidth]{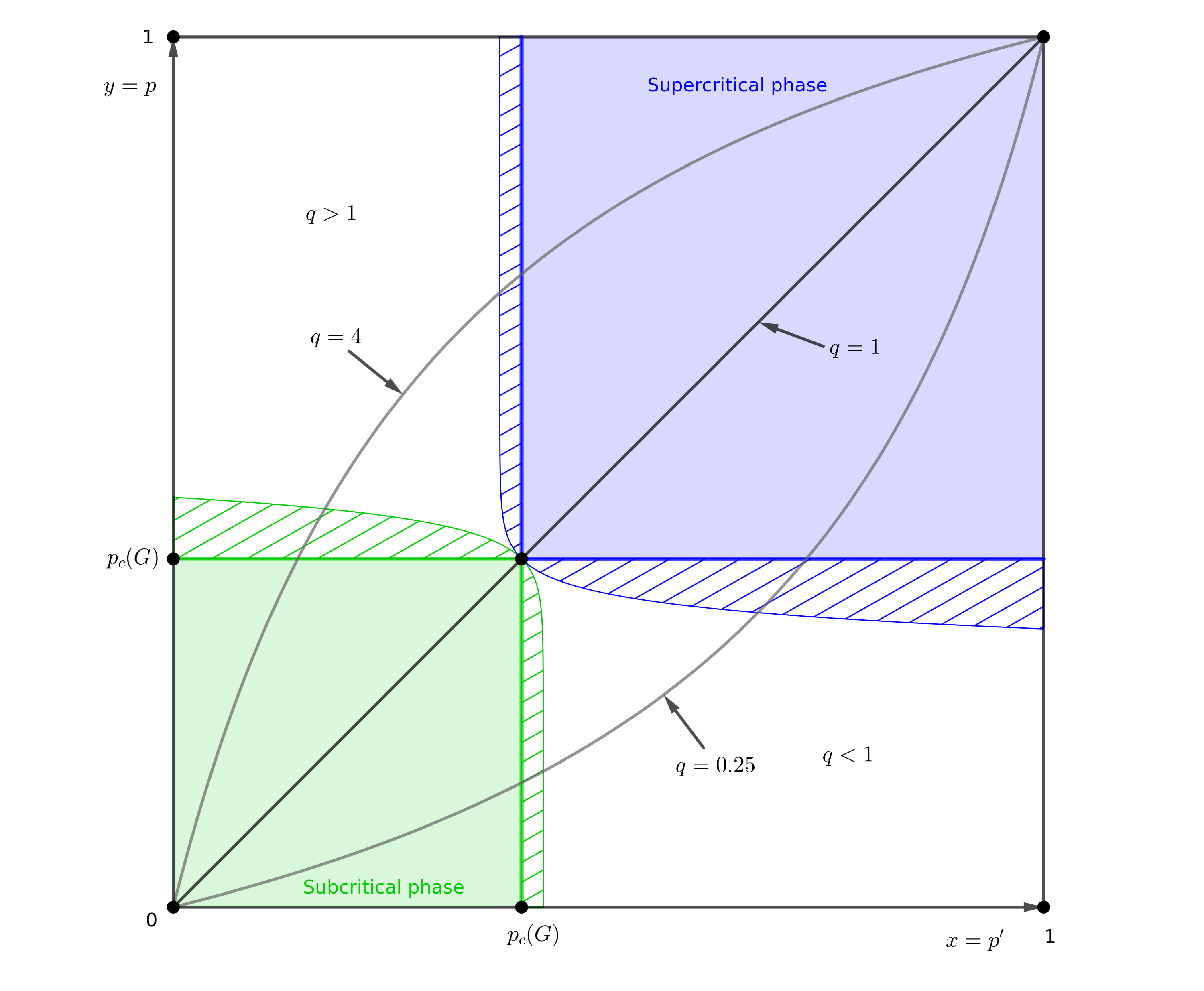}
	\caption{Phase diagram of the FK model on a graph which satisfies the hypothesis of Theorem \ref{thm:new_bound_pc(q)}.}
	\label{fig:phase_diagram_general}
\end{figure}

\begin{remark}
	The above result is stated for quasi-transitive graphs, which are those for which the automorphism group partitions the graph into finitely many orbits.
	We do not believe that this assumption is strictly necessary.
	The crucial geometric properties on the graph that we are using in the proof are on one hand that it has bounded degree, and on the other hand that it contains special edges which are on a doubly-infinite path and on a cycle of bounded length, and moreover that these special edges are sufficiently spread in the graph (more precisely, we need that each edge of the graph is at bounded distance from such a special edge).
	We thus believe that the statements remain true for some non-quasi-transitive graphs.
\end{remark}
There is actually another way of defining infinite-volume measures for the random cluster model, given by the DLR approach. The above theorem extends to this new class of FK measures, that we properly define in the next section.
Afterwards, we will be in a position to discuss uniqueness of the infinite-volume measure.

\subsection{Specifications for the Random Cluster model}

Before formally stating our main result, a few general considerations on Gibbs measures for the FK model are needed. Let $G=(V,E)$ be an infinite, connected, locally finite graph, and fix $\Lambda \subset G$ a finite subgraph; denote by $\partial \Lambda$ the set of vertices in $\Lambda$ having at least one neighbor outside it. 
A configuration of the random-cluster model inside $\Lambda$ is an element of $\{0,1\}^{E(\Lambda)}$, where $E(\Lambda)$ denotes the set of edges whose both endpoints are in $\Lambda$.

By a standard computation, the probability measures $\phi_{\Lambda,p,q}^{\alpha}$ satisfy the following compatibility relation: given $\Lambda \subset \Delta$, a partition $\alpha$ on $\partial\Delta$ and a configuration $\eta$ on $\Delta \setminus \Lambda$, the conditional distribution, under $\phi_{\Delta,p,q}^{\alpha}$, of $\omega_{|\Lambda}$ given $\omega_{|(\Delta \setminus \Lambda)} = \eta$ is equal to $\phi_{\Lambda,p,q}^{\alpha'}$ where $\alpha' =: \pi(\alpha,\eta)$ is the partition of $\partial \Lambda$ induced by $\eta$ and $\alpha$. In other words, the $\phi_{\Lambda,p,q}^{\alpha}$ behave like a specification, with one important difference: it is not the case in general that every partition $\alpha$ on $\partial \Lambda$ can be generated by a configuration outside $\Lambda$. 

To form a specification in the usual sense, one needs to fix a \emph{convention} to form partitions from configurations, that is, a family of measurable maps \[\Phi_\Lambda : \{0,1\}^{E(\Lambda)^c} \to \Pi(\partial\Lambda)\] so that the conditional distribution inside $\Lambda$ given a configuration $\omega$ outside $\Lambda$ can be defined as $\phi_{\Lambda, p, q}^{\Phi_\Lambda(\omega)}$. To preserve the compatibility relation above, such a convention needs to itself satisfy the following: for every $\Lambda \subset \Delta$ and any configurations $\omega$ on $\Delta^c$ and $\eta$ on $\Delta \setminus \Lambda$, \[\Phi_\Lambda(\omega \cup \eta) = \pi(\Phi_{\Delta}(\omega),\eta).\] This implies in particular that
\begin{itemize}
	\item Any two vertices of $\partial\Lambda$ that are connected by a path in $\omega$ outside $\Lambda$ are in the same block of $\Phi_\Lambda(\omega)$;
	\item Any two vertices of $\partial\Lambda$ belonging to two disjoint clusters of $\omega$ in $\Lambda^c$, at least one of which being finite, are in different blocks of $\Phi_\Lambda(\omega).$
\end{itemize}
In other words, $\Phi_\Lambda$ only describes which pairs of disjoint infinite clusters outside $\Lambda$ are ``joined at infinity,'' in a measurable way. We will say that a probability measure $\phi$ on configurations in $G$ is an \emph{FK $\Phi$-Gibbs measure} with parameters $p$ and $q$ (or shortly is $\Phi$-Gibbs) if it is a Gibbs measure for the specification $\big(\phi_{\Lambda,p,q}^{\Phi_\Lambda(\cdot)}\big)_\Lambda$, that is, $\phi$ satisfies the DLR equations:
\begin{equation}\label{def: DLR-specification}
	\forall \Lambda \Subset G, \forall \xi \in \{0,1\}^E, \  \phi(\cdot \mid \mathcal{F}_{E(\Lambda)^c})(\xi) = \phi^{\Phi_\Lambda(\xi)}_{\Lambda, p, q}(\cdot) \quad \text{for } \phi-\text{a.e. } \xi.
\end{equation}
It is a general fact that a positive specification is characterized by its singleton part, see \cite{Georgii} Theorem 1.33.
Hence, for $\phi$ to be $\Phi$-Gibbs, it is enough to have for $\phi-$almost every $\omega$,
\[\phi(\omega(e)=1 \mid \mathcal{F}_{\langle e \rangle})(\omega)=\begin{cases}p & \text { if } \omega \in K^\Phi_e \\ \frac{p}{p+q(1-p)} & \text { if } \omega \notin K^\Phi_e\end{cases}\]
where $\mathcal{F}_{\langle e \rangle}$ is the $\sigma-$algebra generated by the states of all edges except $e$, and $K_e^\Phi$ is the event that both endpoints of $e$ lie in the same block in the partition $\Phi_{\{e\}}(\omega_{\langle e\rangle} )$; it means that either they are connected in the configuration $\omega_{\langle e\rangle}$ or that they belong to two distinct infinite clusters that are joined at infinity by the convention $\Phi$.

Two important examples of such are the \emph{free convention} $\Phi^f$ for which two vertices of $\partial\Lambda$ are in the same block of $\Phi^f_\Lambda(\omega)$ if and only if they are connected by a path in $\omega$ (\emph{i.e.}, nothing happens at infinity) and the \emph{wired convention} $\Phi^w$ for which two vertices of $\partial\Lambda$ are in the same block of $\Phi^w(\omega)$ if and only if they are connected by a path in $\omega$ or they are both in infinite clusters of $\omega$ (\emph{i.e.}, all infinite clusters are merged). If $G$ is a tree, every measurable partition of the end of $G$ induces a different convention.

\bigskip

In general, there is no reason that the set of all $\Phi$-Gibbs measures is the same for different conventions. For instance, on a tree, the only $\Phi^f$-Gibbs measure is the product measure with parameter $p'$ while for $\Phi^w$, every measure having doubly-infinite clusters will be non-product (and such measures do exist for $p$ close enough to $1$). On the other hand, one sees directly from the definition above that if a measure $\phi$ is $\Phi$-Gibbs and is such that $\phi$-a.s. there is at most one infinite cluster, then it is $\Psi$-Gibbs for every other convention $\Psi$ as well: typically, on an amenable lattice, this is the case for all translation-invariant measures, by an argument of Burton and Keane \cite{BurtonKeane}.
It is also true (see \cite{grimmett2006random} Theorem 4.31) that if an FK limit measure satisfies that there is at most one infinite cluster a.s., then it is an FK Gibbs measure (for any convention).

Thus, in general, different conventions give rise to different sets of Gibbs measures.
One might believe that weak limits of configurations on $\Lambda$ with empty boundary condition (for which all vertices of $\partial\Lambda$ are in different blocks) are $\Phi^f$-Gibbs, and that weak limits of configuration with wired boundary condition (for which all vertices of $\partial\Lambda$ are in the same block) are $\Phi^w$-Gibbs.
Unfortunately, this is not true in general as underlined by Halberstam and Hutchcroft in \cite[Remark 9]{HalberstamHutchcroft}: if one considers the \emph{uniform spanning tree}, it is known since the seminal work of Pemantle \cite{Pemantle} that on $\bbZ^d$, for all $d\geq1$, the free and wired limit measures (are well-defined and) always coincide; moreover, for $d\geq 5$, the measure is supported on spanning forests containing infinitely many trees, so that it is not a Gibbs measure for the free convention (if it was the case, for any finite box intersecting several infinite trees, these trees would be connected inside the box with probability 1); more trivially, the measure on $\bbZ$ is not a Gibbs measure for the wired convention (since otherwise every finite box would contain a.s. one closed edge).
We believe that this is a direction worth exploring, where interesting behavior will be found for the FK model in the case $q<1$; but it is outside the scope of the present article.

\begin{remark}
	Halberstam and Hutchcroft in \cite{HalberstamHutchcroft} use augmented subgraphs to define Gibbs measures for the \emph{arboreal gas}, and their formalisms also extend to define Gibbs measures for the random cluster model. One benefit of their definitions, since they allow random boundary conditions, is that Gibbs measures coincide with weak limits. We don't need this general formalism in our proofs, since the regimes where our proofs work already have the `at most one infinite cluster' property.
\end{remark}

\bigskip

In the following, for a convention $\Phi$, we denote by $\G^\Phi_{p,q}$ the set of $\Phi-$Gibbs measures for the random cluster model with parameters $p$ and $q$ on the graph $G$. Let $\W_{p,q}$ be the set of FK (weak) limit measures, and 
$$\operatorname{FK}(p,q)= \bigcup_{\Phi \text{ a convention}} \G^\Phi_{p,q} \cup \W_{p,q}$$
the set of all FK measures on $G$ with parameter $p$ and $q$.
It is clear from the definition of a Gibbs measure that $\G^\Phi_{p,q}$ is always a convex set (or empty). We write $\operatorname{ex}\G^\Phi_{p,q}$ for the set of its extremal elements, which are called extremal Gibbs measure.
It is a general fact that extremal Gibbs measures can be obtained as a thermodynamic limit along any sequence of boxes, with some boundary conditions, see \cite{friedli_velenik_2017}.
We gather this property together with another aforementioned fact about FK measures in the following Proposition:
\begin{proposition}\label{prop:inclusion_FK_measures}
	For all $p\in[0,1]$ and $q>0$, we have:
	\begin{enumerate}
		\item For any convention $\Phi$, every $\phi \in \operatorname{ex} \G^\Phi_{p,q}$ satisfies $$\phi=\lim_{n \to \infty} \phi^{\Phi_{\Lambda_n}(\omega)}_{\Lambda_n,p,q} \text{ for $\phi-$a.e. $\omega$}$$ for all sequence of boxes $\Lambda_n \uparrow G$; in particular, $\operatorname{ex} \G^\Phi_{p,q} \subset \W_{p,q}.$
		\item If $\phi \in \operatorname{FK}(p,q)$ is such that $\phi-$a.s. there is at most one infinite cluster, then $\phi \in \G^\Phi_{p,q}$ for all $\Phi$.
	\end{enumerate}
\end{proposition}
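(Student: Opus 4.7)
The plan for part (1) is to invoke standard Gibbs-measure theory. A classical fact (\cite{Georgii}, also \cite{friedli_velenik_2017}) states that the extremal elements of $\G^\Phi_{p,q}$ are precisely the $\Phi$-Gibbs measures $\phi$ that are trivial on the tail $\sigma$-algebra $\mathcal{T} := \bigcap_{\Lambda \Subset G} \mathcal{F}_{E(\Lambda)^c}$. Combined with L\'evy's downward theorem, for any bounded measurable $f$ and any exhaustion $\Lambda_n \uparrow G$,
\[
\phi\bigl(f \mid \mathcal{F}_{E(\Lambda_n)^c}\bigr) \longrightarrow \phi(f \mid \mathcal{T}) = \phi(f) \quad \phi\text{-a.s.}
\]
The DLR equations \eqref{def: DLR-specification} identify the left-hand side with $\phi_{\Lambda_n, p, q}^{\Phi_{\Lambda_n}(\omega)}(f)$ as soon as $f$ depends only on edges inside $\Lambda_n$. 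Picking a countable convergence-determining family of cylinder functions and intersecting the corresponding full-measure sets then yields $\phi_{\Lambda_n, p, q}^{\Phi_{\Lambda_n}(\omega)} \to \phi$ weakly for $\phi$-a.e.\ $\omega$; this gives both the announced representation and the inclusion $\operatorname{ex}\G^\Phi_{p,q} \subset \W_{p,q}$.

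For part (2), I split according to the definition of $\operatorname{FK}(p,q)$. If $\phi \in \W_{p,q}$, the conclusion is exactly \cite{grimmett2006random} Theorem 4.31, already referred to in the discussion above. The substantive case is $\phi \in \G^\Psi_{p,q}$ for some convention $\Psi$: I must show $\phi \in \G^\Phi_{p,q}$ for every other convention $\Phi$. By the singleton characterization of positive specifications (\cite{Georgii} Theorem 1.33, quoted in the text above), it suffices to verify that the single-edge conditional probabilities of $\phi$ take the values dictated by the $\Phi$-specification, and this reduces to showing $K_e^\Phi = K_e^\Psi$ $\phi$-a.s.\ for every edge $e$.

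The events $K_e^\Phi$ and $K_e^\Psi$ are both measurable with respect to $\omega_{\langle e\rangle}$, and they agree whenever the endpoints of $e$ are already connected in $\omega_{\langle e\rangle}$ or whenever at least one endpoint lies in a finite cluster of $\omega_{\langle e\rangle}$. Hence their symmetric difference is contained in the event $D_e$ that $\omega_{\langle e\rangle}$ has two distinct infinite clusters, one containing each endpoint of $e$. On $D_e \cap \{\omega(e) = 0\}$ the configuration $\omega$ itself carries two distinct infinite clusters, contradicting the hypothesis, so this intersection has $\phi$-measure zero. On the other hand, the $\Psi$-Gibbs property gives $\phi(\omega(e) = 0 \mid \mathcal{F}_{\langle e\rangle}) \geq \min(1-p, 1-p') > 0$ whenever $p \in (0,1)$ (the boundary cases are trivial), so integrating over $D_e$ yields $0 = \phi(D_e \cap \{\omega(e) = 0\}) \geq \min(1-p, 1-p')\cdot \phi(D_e)$, forcing $\phi(D_e) = 0$, as required.

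The main delicacy I anticipate lies in this second subcase: the uniqueness-of-infinite-cluster assumption applies to $\omega$, whereas the disagreement between conventions concerns the modified configuration $\omega_{\langle e\rangle}$. The positivity of the single-edge transition probabilities is the bridge, ruling out positive-measure configurations outside $e$ whose closure at $e$ would create an extra infinite cluster.
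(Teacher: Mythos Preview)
Your argument is correct, and for part~(1) it is essentially identical to what the paper does: the paper does not prove Proposition~\ref{prop:inclusion_FK_measures} where it is stated, but the argument for item~(1) is reproduced inside the proof of Theorem~\ref{thm:domination_gives_uniqueness}, using exactly the combination of tail-triviality of extremal Gibbs measures (\cite{friedli_velenik_2017}, Theorem~6.58) and the backward martingale convergence theorem that you invoke.

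For part~(2) your treatment is actually more careful than the paper's. The paper simply cites \cite{grimmett2006random} Theorem~4.31 for the weak-limit case and, for the case $\phi\in\G^\Psi_{p,q}$, asserts in the discussion preceding the Proposition that ``one sees directly from the definition'' that $\phi$ is $\Phi$-Gibbs for every other convention~$\Phi$. You make explicit the subtlety that the uniqueness hypothesis concerns the infinite clusters of~$\omega$, whereas the conventions $\Phi$ and $\Psi$ act on~$\omega_{\langle e\rangle}$, and you bridge this gap via the finite-energy bound $\phi(\omega(e)=0\mid\mathcal F_{\langle e\rangle})\geq\min(1-p,1-p')>0$. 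This step is genuinely needed (closing a single edge can split one infinite cluster into two), so your version fills in a detail the paper leaves implicit; the underlying idea, however, is the same as the one the paper gestures at.
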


For the purpose of what follows, we will say that \emph{the random-cluster at parameters $p$ and $q$ on the graph $G$ exhibits uniqueness} if for every choice $\Phi$ of convention, the set $\G^\Phi_{p,q}$ has a single element, and that it exhibits \emph{strong uniqueness} if $\operatorname{FK}(p,q)$ has a single element. According to item (1) of the above Proposition, strong uniqueness is equivalent to the uniqueness of the FK limit measure, as $|\operatorname{ex} \G^\Phi_{p,q}|=1$ implies $|\G^\Phi_{p,q}|=1$ (this is an application of the stronger fact that $\G^\Phi_{p,q}$ is a simplex, see \cite{Georgii} Theorem 7.26).

For instance, the usual Peierls argument shows that if $G$ has bounded degree and $q>0$ is fixed, strong uniqueness occurs for every $p$ small enough (and the unique Gibbs measure is concentrated on non-percolating configurations). On the other hand, on a regular tree of degree at least $3$, for $p$ close enough to $1$ and $q>1$, uniqueness occurs but strong uniqueness does not. This is the content of Theorems 6.3 and 6.4 in \cite{GrimmettJanson}.

In the next Section, we state our main result about the strong uniqueness of the random cluster model.
The two properties of Proposition \ref{prop:inclusion_FK_measures} will be crucial in the proof of this result.

\subsection{Phase diagram of the FK model}\label{Phase diagram of the FK model}
Let $G=(V,E)$ be an infinite, locally finite, connected graph.
When $q \geq 1$, the finite dimensional FK measures $\phi^\alpha_{\Lambda,p,q}$ are strongly positively-associated for all finite subgraph $\Lambda \Subset G$ and $\alpha \in \Pi(\partial \Lambda)$, which means in particular that the \emph{FKG inequality} holds:
for any two increasing events $A$ and $B$, \[\phi_{G,p,q}^\alpha(A \cap B) \geq\phi_{G,p,q}^\alpha(A)\phi_{G,p,q}^\alpha(B). \]
The FKG inequality enables to show the existence of two particular infinite-volume measures $\phi^0_{p,q}$ and $\phi^1_{p,q}$, obtained as weak limits of the free and wired boundary conditions respectively.
One can also show that they both are invariant by automorphism of the graph.

The aforementioned Burton--Keane argument then implies that they are $\Phi-$Gibbs for any convention $\Phi$ if the graph $G$ is amenable.
Furthermore, in this case, one has the following monotonicity property:
every $\phi \in \operatorname{FK}(p,q)$ satisfies the stochastic dominations
\begin{equation}
	\phi^0_{p,q} \preceq \phi \preceq \phi^1_{p,q}.
\end{equation}
Strong uniqueness and uniqueness are therefore the same, and hold if and only if $\phi^0_{p,q}=\phi^1_{p,q}$.
It is known that for any $q \geq 1$, this fails for at most countably many edge-weights $p$ \cite[Theorem 4.63]{grimmett2006random}. 
It is then possible to show the existence of a \emph{critical parameter} $p_c(q) \in (0,1)$ for $q\geq 1$ such that every $\phi \in \operatorname{FK}(p,q)$ satisfies \[\phi(\exists\text{ an infinite cluster}) = \begin{cases}0& \text{ if }p<p_c(q)\\1& \text{ if }p>p_c(q)\end{cases}.\]
In dimension 2, the planar duality allows computing the critical point, and in particular it is shown in \cite{BeffaraDuminil} that for $G=\bbZ^2$, the self-dual point is critical, that is \[\forall q \geq 1, \ p_c(q) = \dfrac{\sqrt q}{1+\sqrt q}.\]
Moreover, we know precisely the regime where uniqueness holds.
Namely, when $q\in[1,4]$ there is a unique infinite volume measure for all values of $p$ (see \cite{DST}); when $q>4$, there is uniqueness everywhere but at $p_c(q)$ (see \cite{DGHMT}).

For $q<1$ however, the existence of the critical point is an open question, and the wider regime where uniqueness is known is obtained by the comparison inequalities.
The stochastic dominations \eqref{eq:comparison_inequalities} observed for finite graphs extend directly (we give a proof of this fact in Section \ref{Extending comparison inequalities to infinite graphs} for our enhanced comparison inequalities) to all infinite volume measures: for every FK measure $\phi$ on $G$, one has
\begin{equation}
	\PP^G_{\min(p,p')} \preceq \phi \preceq \PP^G_{\max(p,p')}
\end{equation}
For $q<1$, assuming that $p_c(q)$ exists, this implies that $\frac{p_c(q)}{p_c(q)+q(1-p_c(q))} \geq p_c(G) \geq p_c(q)$.
For $G=\bbZ^2$, the value of $p_c(G)=p_c(1)$ is known to equal 1/2 \cite{Kesten}, which gives
\[\forall q<1, \ \frac{q}{1+q} \leq p_c(q) \leq \frac12.\]
Theorem \ref{thm:new_bound_pc(q)} implies that these two putative inequalities are strict.

When the product measure $\PP^G_{\max(p,p')}$ is subcritical, one also obtains uniqueness of the FK measure:
\begin{theorem}[Theorem 5.119 in \cite{grimmett2006random}]\label{thm:uniqueness_Grimmett}
	For all $q>0$, one has strong uniqueness when $\max(p,p') < p_c(G)$ and the unique FK measure is subcritical.
\end{theorem}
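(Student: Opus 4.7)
The plan is to separate the statement into its two independent parts: subcriticality of any FK measure, and strong uniqueness. The subcriticality half is immediate from the infinite-volume version of the comparison inequality (as extended in the paragraph preceding the theorem): every $\phi \in \operatorname{FK}(p,q)$ satisfies $\phi \preceq \PP^G_{\max(p,p')}$, and since $\max(p,p') < p_c(G)$, the right-hand side produces no infinite cluster a.s., so neither does $\phi$. In particular every $\phi \in \operatorname{FK}(p,q)$ trivially satisfies the ``at most one infinite cluster'' hypothesis of Proposition \ref{prop:inclusion_FK_measures}(2), and is therefore in $\G^\Phi_{p,q}$ for every convention $\Phi$. Combining with Proposition \ref{prop:inclusion_FK_measures}(1) and the simplex structure of each $\G^\Phi_{p,q}$, strong uniqueness reduces to proving $|\W_{p,q}| \leq 1$: the weak limit of $\phi_{\Lambda_n,p,q}^{\alpha_n}$, when it exists, does not depend on the sequence $(\Lambda_n, \alpha_n)$.

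To show this, I would fix a finite edge set $F$ and $\varepsilon > 0$, then choose a finite $\Lambda \Supset F$ large enough that under $\PP^G_{\max(p,p')}$ the vertex set of $F$ is connected to $\partial \Lambda$ with probability less than $\varepsilon$ (possible by subcriticality). For any $\Lambda' \supset \Lambda$ and any $\alpha_1, \alpha_2 \in \Pi(\partial \Lambda')$, couple $\phi_{\Lambda',p,q}^{\alpha_1}$ and $\phi_{\Lambda',p,q}^{\alpha_2}$ by heat-bath dynamics in which at each step both chains update the same edge $e$ using the same uniform $U \in [0,1]$, accepting $e$ open iff $U$ is below the conditional probability from \eqref{eq:conditional_probas}. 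The two conditional probabilities at $e$ coincide whenever the cluster of $e$'s endpoints in the current configuration fails to reach $\partial \Lambda'$, so by induction along the trajectory the disagreement set remains contained in the cluster of $\partial \Lambda'$ in $\omega_1 \cup \omega_2$. Since both chains are dominated by $\PP^{\Lambda'}_{\max(p,p')}$, the disagreement set stays disjoint from $F$ with probability $\geq 1-\varepsilon$, hence
\[ \bigl\| \phi_{\Lambda',p,q}^{\alpha_1}|_F - \phi_{\Lambda',p,q}^{\alpha_2}|_F \bigr\|_{TV} \leq 2\varepsilon. \]
Letting $\varepsilon \to 0$ and then using any compactness argument over sequences $\Lambda_n \uparrow G$ identifies all limit points, giving $|\W_{p,q}| \leq 1$.

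The main obstacle, compared with the $q \geq 1$ regime, is the absence of FKG: one cannot sandwich a general Gibbs measure between a free and a wired extremal limit and reduce uniqueness to $\phi^0_{p,q} = \phi^1_{p,q}$. Instead one must handle arbitrary partition boundary conditions simultaneously, which is exactly what the disagreement-percolation coupling above accomplishes; the single essential input is the uniform stochastic upper bound by the subcritical Bernoulli product, valid for any $\alpha$. The one technical point deserving care is the invariant that the disagreement set remains in the $\partial \Lambda'$-cluster of $\omega_1 \cup \omega_2$: when the events $K_e^{\alpha_1}(\omega_1)$ and $K_e^{\alpha_2}(\omega_2)$ differ, the certifying path either uses the boundary wiring (giving a connection to $\partial\Lambda'$ directly) or passes through a disagreement edge, in which case induction propagates the invariant.
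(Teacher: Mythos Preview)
Your reduction --- subcriticality of every $\phi\in\operatorname{FK}(p,q)$ from the comparison inequality, then collapsing strong uniqueness to $|\W_{p,q}|\le 1$ via Proposition~\ref{prop:inclusion_FK_measures} --- is correct and matches how the paper organises the argument. The gap is in the heat-bath coupling. The invariant you want, that the disagreement set $D_t$ is contained in the $\partial\Lambda'$-cluster $C_t$ of $\omega_1\cup\omega_2$, is \emph{not} preserved along the trajectory. Your inductive step only checks that a \emph{newly created} disagreement lies in the current boundary cluster; it says nothing about what happens to existing disagreements when a later update shrinks $C_t$. Concretely: when an edge $e$ that was open in $\omega_1\cup\omega_2$ is updated with $U>\max(p,p')$, it becomes closed in both configurations, and any disagreement edge $f\ne e$ whose only connection to $\partial\Lambda'$ in $(\omega_1\cup\omega_2)_{t-}$ ran through $e$ now lies outside $C_t$ while remaining in $D_t$. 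Replacing $\omega_1\cup\omega_2$ by a dominating Bernoulli process $\omega_3$ with $\omega_3(e)=\ind{U<\max(p,p')}$ (which does fix the secondary issue that separate stochastic domination of $\omega_1,\omega_2$ says nothing about their union) does not help: $\omega_3$ shrinks at exactly the same update.

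The paper sidesteps this by a \emph{static exploration} rather than a dynamic coupling. One reveals edges one at a time, working inward from the boundary and always choosing an edge adjacent to the already-explored boundary cluster of the dominating Bernoulli sample $\omega_3$; each edge is sampled once, from its conditional law given the revealed edges, with the comparison inequality applied at that step uniformly in the accumulated boundary condition. When the exploration halts, its inner edge-boundary is a cutset of edges closed in $\omega_3$, hence closed in both $\omega_1$ and $\omega_2$, and the \emph{free-boundary Markov property} forces both configurations to have the same (free) conditional law on the unexplored interior. That one-shot, ordered revelation is what produces a fixed cutset one can condition on; a Glauber trajectory does not.
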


The result still holds at $p_c(G)$ if $\theta(p_c):=\PP^G_{p_c(G)}(\exists \text{ an infinite cluster})$ equals $0$. We prove this without assuming that $\theta(p_c)=0$, and in fact that it is true slightly above $p_c(G)$, for graphs satisfying the assumptions of Theorem \ref{thm:new_bound_pc(q)}.

\begin{theorem}[Strong uniqueness in the subcritical regime]\label{thm:subcritical_strong_uniqueness}
	Let $G$ be a graph satisfying the hypotheses of Theorem \ref{thm:new_bound_pc(q)}. For all $q \neq1$, there exists $\delta'>0$ such that one has strong uniqueness when $\max(p,p') < p_c(G)+\delta'$ and the unique FK measure is subcritical.
\end{theorem}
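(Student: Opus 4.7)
The strategy is to import the proof of \cite[Theorem 5.119]{grimmett2006random} (our Theorem \ref{thm:uniqueness_Grimmett}), but to substitute the enhanced comparison inequalities for the classical ones, so that a domination by a subcritical (inhomogeneous) Bernoulli percolation remains available in the slightly larger window $\max(p,p')<p_c(G)+\delta'$.

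First I would show that every $\phi\in\operatorname{FK}(p,q)$ in this regime is stochastically dominated by a subcritical Bernoulli percolation on $G$. For weak limits $\phi\in\W_{p,q}$ this is exactly Theorem \ref{thm:new_bound_pc(q)}(1). For a Gibbs measure $\phi\in\G^\Phi_{p,q}$ under an arbitrary convention $\Phi$, I would use the DLR equations: on a finite box $\Lambda$, the conditional law of $\phi$ inside $\Lambda$ is an FK measure $\phi^{\Phi_\Lambda(\omega)}_{\Lambda,p,q}$, which by the quantitative form of Theorem \ref{thm:enhanced_comparison_ineq} is dominated by $\PP^\Lambda_{\max(p,p')-\varepsilon'_e}$. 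Quasi-transitivity makes it possible to choose $\varepsilon'_e$ and the essential-enhancement parameter $\delta'$ independently of $\Lambda$ and of the boundary partition, and the Aizenman--Grimmett theory \cite{AizenmanGrimmett} then makes the enhanced percolation subcritical on $G$ whenever $\max(p,p')<p_c(G)+\delta'$. Averaging over the boundary and letting $\Lambda\uparrow G$ yields the domination for $\phi$. In particular $\phi$ has a.s.\ no infinite cluster, so by Proposition \ref{prop:inclusion_FK_measures}(2) it is $\Psi$-Gibbs for every convention $\Psi$.

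For strong uniqueness, fix $\phi_1,\phi_2\in\operatorname{FK}(p,q)$, a finite box $\Lambda$, and a cylinder event $A$ depending only on edges in $E(\Lambda)$. For any $\Delta\supset\Lambda$ and any convention $\Phi$, applying the DLR equations to $\phi_i$ with respect to $\Delta$ gives
\[\phi_i(A)=\int\phi^{\Phi_\Delta(\omega)}_{\Delta,p,q}(A)\,d\phi_i(\omega),\qquad i\in\{1,2\},\]
so it is enough to show that
\[\sup_{\alpha,\beta\in\Pi(\partial\Delta)}\bigl|\phi^\alpha_{\Delta,p,q}(A)-\phi^\beta_{\Delta,p,q}(A)\bigr|\xrightarrow[\Delta\uparrow G]{}0.\]
This is the classical disagreement-percolation step: one couples the two finite-volume FK measures through a common Glauber dynamics and dominates the set of disagreeing edges by the cluster of $\partial\Delta$ in an auxiliary inhomogeneous Bernoulli percolation with parameters $\max(p,p')-\varepsilon'_e$. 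By the first step this percolation is subcritical on $G$, and exponential decay of its cluster radii prevents the disagreement from reaching $\Lambda$ for $\Delta$ large enough, whence $\phi_1(A)=\phi_2(A)$.

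The main obstacle is the adaptation of the coupling used in \cite[Theorem 5.119]{grimmett2006random} to the enhanced domination. Two ingredients are needed: (i) a monotone coupling of $\phi^\alpha_{\Delta,p,q}$ and $\phi^\beta_{\Delta,p,q}$ whose symmetric difference is edge-by-edge dominated by the enhanced parameters $\max(p,p')-\varepsilon'_e$, which should be extracted from a careful rerun of the modified Glauber dynamics underlying Theorem \ref{thm:enhanced_comparison_ineq}; and (ii) sharpness of the phase transition for the essentially enhanced Bernoulli percolation, so that subcriticality translates into genuine exponential decay of connection probabilities. Both are in principle available from the Aizenman--Grimmett toolbox, but the coupling has to be re-engineered to output the enhanced rather than the classical comparison.
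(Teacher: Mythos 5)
Your first step (every FK measure in the extended window is dominated by a subcritical inhomogeneous percolation, hence has a.s.\ no infinite cluster and is $\Psi$-Gibbs for every convention $\Psi$) is correct and matches the paper. The gap is in your second step. The disagreement-percolation/exploration coupling of Grimmett's Theorem 5.119 is built edge by edge from $\partial\Delta$ inward, and it relies on two facts that fail for the enhanced comparison. First, the classical coupling uses the \emph{pointwise} bound $\phi^{\alpha}_{\Delta,p,q}(\omega(e)=1\mid\omega_{\langle e\rangle})\le\max(p,p')$, valid for every partial configuration and uniformly in the boundary condition; the enhanced parameter $\max(p,p')-\varepsilon'_e$ is \emph{not} a bound on these conditional probabilities (they equal $p$ or $p'$ according to whether $K^\alpha_e$ holds, with no improvement), but only emerges as a property of the stationary distribution of the modified Glauber dynamics, in which each update of an enhanced edge also resamples all auxiliary edges. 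Second, and more fundamentally, the exploration argument needs the coupling to satisfy the free-boundary Markov property --- once a closed cutset of the dominating configuration has been found, the two FK configurations in its interior must both have the free FK law --- and the coupling produced by the modified, non-local Glauber dynamics does not have this property. The paper says so explicitly at the end of Section 2 (``the coupling we get via the Glauber dynamics \ldots does not verify the free-boundary Markov property. Therefore, we cannot imitate the above proof by exploration and a different strategy is required''). So item (i) of what you call the ``main obstacle'' is not an engineering issue resolvable from the Aizenman--Grimmett toolbox, which concerns only the shift of the critical point for enhanced percolations and says nothing about couplings: it is precisely the reason the classical route is abandoned.

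The paper's actual proof (Theorem \ref{thm:domination_gives_uniqueness}) needs only the infinite-volume domination that you establish in your first step. It takes two \emph{extremal} $\Phi^f$-Gibbs measures $\mu_1,\mu_2$, couples both below a single realization $\omega_3$ of the subcritical dominating percolation, uses subcriticality of $\omega_3$ to extract random boxes $S_n\uparrow G$ whose boundaries are closed in $\omega_3$ (hence in $\omega_1$ and $\omega_2$), applies the DLR equations to see that both conditional laws inside $S_n$ are the free FK measure and therefore coincide, and concludes by backward martingale convergence together with tail triviality of extremal Gibbs measures; the simplex structure of $\G^{\Phi^f}_{p,q}$ then upgrades $|\operatorname{ex}\G^{\Phi^f}_{p,q}|=1$ to $|\G^{\Phi^f}_{p,q}|=1$, and your first step identifies $\operatorname{FK}(p,q)$ with $\G^{\Phi^f}_{p,q}$. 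To repair your argument you should replace the exploration step by this extremal-decomposition argument, or else construct a genuinely new coupling enjoying both the enhanced domination and the free-boundary Markov property, which is not currently available.
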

The $\delta'$ in the above statement is the same as in Theorem \ref{thm:new_bound_pc(q)}.

There is no satisfactory analogue to the above theorem for the supercritical regime. In fact, a proof of uniqueness in the case of domination from below by a supercritical product measure is missing.
In arbitrary dimension, uniqueness can therefore be proved only for very large $p$.
However, for plane graphs, the duality enables to transpose the subcritical statements of Theorems \ref{thm:uniqueness_Grimmett} and \ref{thm:subcritical_strong_uniqueness} to the supercritical regime (we refer to the last Section \ref{Uniqueness in the supercritical phase through planar duality} for the definitions regarding planar duality).
Strong uniqueness on $\bbZ^2$ was therefore already known for $\min(p,p') \geq 1/2$, regardless of the value of $q>0$. As before, we slightly improve this range of parameters, for $\bbZ^2$ and more general plane graphs:
\begin{theorem}[Strong uniqueness in the supercritical regime in 2D]\label{thm:supercritical_strong_uniqueness}
	Let $G$ be as in Theorem \ref{thm:subcritical_strong_uniqueness}; we further assume that $G$ is embedded in $\bbR^2$, and that its dual $G^*$ is locally finite. For all $q\neq 1$, there exists $\delta''>0$ such that one has strong uniqueness when $\min(p,p') >  1-p_c(G^*)-\delta''$. Moreover, almost surely for the unique FK measure, there is exactly one infinite cluster.
\end{theorem}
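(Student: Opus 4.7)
The plan is to reduce the statement to Theorem \ref{thm:subcritical_strong_uniqueness} applied to the dual graph $G^*$. Standard planar FK duality assigns to each configuration $\omega$ on $G$ the dual $\omega^*$ on $G^*$ defined by $\omega^*(e^*) = 1 - \omega(e)$; with the dual edge-weight
\[
p^* := \frac{q(1-p)}{p+q(1-p)},
\]
the finite-volume measure $\phi^\alpha_{\Lambda,p,q}$ pushes forward to $\phi^{\alpha^*}_{\Lambda^*,p^*,q}$ for the induced dual boundary condition $\alpha^*$. Two algebraic identities drive the reduction: $p' + p^* = 1$ (directly from the definitions of $p'$ and $p^*$) and, after a short computation, $(p^*)' = 1-p$. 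Together they yield $\min(p,p') = 1 - \max(p^*,(p^*)')$, so the hypothesis $\min(p,p') > 1 - p_c(G^*) - \delta''$ becomes exactly the subcritical hypothesis $\max(p^*,(p^*)') < p_c(G^*) + \delta''$ of Theorem \ref{thm:subcritical_strong_uniqueness} on $G^*$ at parameters $(p^*,q)$.

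The first technical step is to promote this duality from finite to infinite volume. The map $\omega \mapsto \omega^*$ is a measurable bijection between configuration spaces, and I would verify that it transports FK limit measures and $\Phi$-Gibbs measures on $G$ at $(p,q)$ bijectively to their counterparts on $G^*$ at $(p^*,q)$, with each convention $\Phi$ on $G$ mapping to a dual convention $\Phi^*$ on $G^*$ encoding the same ``joined at infinity'' data for infinite dual clusters. This is essentially classical; the careful implementation is deferred to the final section on planar duality.

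Next, I set $\delta'' := \delta'(G^*)$, the constant produced by Theorem \ref{thm:subcritical_strong_uniqueness} applied to $G^*$. The dual graph inherits the hypotheses of Theorem \ref{thm:new_bound_pc(q)} from $G$: quasi-transitivity, bounded degree, and the existence of an edge on a cycle and a doubly-infinite path all pass to $G^*$ under planar duality (a cycle in $G$ dualizes to a cut in $G^*$ and vice versa, and a doubly-infinite path separating the plane yields a doubly-infinite dual path). Theorem \ref{thm:subcritical_strong_uniqueness} then gives strong uniqueness of the FK measure on $G^*$ at $(p^*,q)$, with the unique measure subcritical; via the bijection of the previous paragraph, this transfers to strong uniqueness on $G$ at $(p,q)$.

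It remains to show that the unique primal measure has exactly one infinite cluster almost surely. Subcriticality of the dual measure gives no infinite dual cluster a.s.; by a standard planar topology argument, the absence of an infinite dual cluster forces the existence of at least one infinite primal cluster, and uniqueness of the infinite primal cluster then follows from a Burton--Keane / Zhang-type argument applied to the (automatically automorphism-invariant) unique FK measure. I expect the main obstacle to lie in the second step above --- the rigorous identification of dual conventions with primal ones at the level of infinite-volume Gibbs measures --- rather than in the algebraic identity, the invocation of Theorem \ref{thm:subcritical_strong_uniqueness}, or the final planar topology step.
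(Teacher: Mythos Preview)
Your overall strategy --- reduce to Theorem~\ref{thm:subcritical_strong_uniqueness} on $G^*$ via the algebraic identity $\min(p,p')=1-\max(p^*,(p^*)')$ --- is exactly the paper's. But the order of operations in your argument contains a genuine gap, and it is precisely the step you flagged as the likely obstacle.

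You assert that $\omega\mapsto\omega^*$ carries $\operatorname{FK}_G(p,q)$ bijectively onto $\operatorname{FK}_{G^*}(p^*,q)$, with each convention $\Phi$ mapping to a dual convention $\Phi^*$. The paper explains why this fails in general: an arbitrary partition $\alpha\in\Pi(\partial\Lambda)$ need not have a meaningful planar dual (non-planar wirings make $\Lambda\cup\alpha$ non-planar, so the pushforward of $\phi^\alpha_{\Lambda,p,q}$ is not an FK measure on $\Lambda^*$ with any boundary condition), and likewise ``it is not possible in general to define the dual of a convention.'' The paper's workaround is to establish \emph{first}, for every $\phi\in\operatorname{FK}_G(p,q)$, that there is a.s.\ at most one infinite cluster, \emph{without} using FK duality: one dualizes the Bernoulli comparison inequality (trivial, since product measures dualize to product measures) to get $\phi^*\preceq\PP^{G^*}_{\max(p^*,p'^*)-\varepsilon\ind{e\in E'}}$, and this right-hand side is subcritical by essential enhancements on $G^*$. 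Once ``at most one infinite cluster'' is known, the convention is irrelevant, boundary conditions are determined by the configuration on a finite region, and the finite-volume duality relation \eqref{eq:duality_relation_FK} becomes available; only then does the paper pass to $G^*$ and invoke Theorem~\ref{thm:subcritical_strong_uniqueness}.

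There is a second, smaller error. Your claim that ``the absence of an infinite dual cluster forces the existence of at least one infinite primal cluster'' is false: critical Bernoulli percolation on $\bbZ^2$ has a.s.\ no infinite primal cluster and no infinite dual cluster, and concentric disjoint open primal circuits give a deterministic counterexample. The paper instead obtains ``at least one infinite cluster'' directly from the primal lower comparison $\PP^G_{\min(p,p')+\varepsilon\ind{e\in E'}}\preceq\phi$, which is supercritical in the stated range. Your appeal to Burton--Keane is then unnecessary (and would anyway be circular, since you need uniqueness of the infinite cluster before you can conclude strong uniqueness via duality, not after).
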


\begin{remark}
	It follows from the sharpness of the phase transition for Bernoulli percolation \cite{DuminilTassion,BeekenkampHulshof} that we always have $1-p_c(G^*) \geq p_c(G)$ for locally finite planar quasi-transitive graphs.
\end{remark}
Obviously, for $G=\bbZ^2$, Theorems \ref{thm:subcritical_strong_uniqueness} and \ref{thm:supercritical_strong_uniqueness} are only new in the regime $q<1$ (thus $\min(p,p')=p$), but in arbitrary dimension, these might give new results even for $q>1$.
As already mentioned above, on $\bbZ^d$ for any $d \geq 3$, it is known that the set of points where uniqueness fails is at most countable for any fixed $q>1$, and included in $[p_c(q), 1-\varepsilon_{d,q}]$ for some $\varepsilon_{d,q}>0$
(see \cite{grimmett2006random} Theorems 4.63 and 5.33). However, we are unaware of any better bounds on $p_c(q)$ than those given by the comparison inequalities.
At least, our results imply that for all $q>1$, $$p_c(\bbZ^d)+\delta' \leq p_c(q) \leq q\left(q-1+\frac{1}{p_c(\bbZ^d)-\delta} \right)^{-1}.$$

\bigskip
In Section \ref{Enhanced comparison inequalities for general finite graphs}, we prove Theorem \ref{thm:enhanced_comparison_ineq}, by introducing the \emph{Glauber dynamics} of the model.
In Section \ref{Extending comparison inequalities to infinite graphs}, we extend our new comparison inequalities to the infinite-volume setting, providing a proof of Theorem \ref{thm:new_bound_pc(q)}.
Finally, Theorem \ref{thm:subcritical_strong_uniqueness} is proved in Section \ref{Uniqueness in the subcritical phase}.
Theorem \ref{thm:supercritical_strong_uniqueness} follows, by a duality argument developed in Section \ref{Uniqueness in the supercritical phase through planar duality}.

\section{The classical proof of uniqueness in the subcritical regime}
In his monograph on the random cluster model published in 2006 \cite{grimmett2006random}, Grimmett proves uniqueness for the random cluster model in the $q<1$ regime when $p'<p_c(G)$ (Theorem 5.119).
The proof works also when $q \ge 1$ (the only modification to do being to exchange the roles of $p$ and $p'$), but much stronger uniqueness results are obtained using FKG inequality, which does not hold true when $q <1$.
In this section, we briefly recall the proof of Theorem \ref{thm:uniqueness_Grimmett} due to Grimmett. Most of the notations are borrowed as is from \cite{grimmett2006random}.
Grimmett considered only weak limits with boundary condition arising from an external configuration, and the free convention at infinity for Gibbs measures. However, in this regime of subcriticality, proving uniqueness of these measures implies strong uniqueness (item (2) of Proposition \ref{prop:inclusion_FK_measures}), so we shall follow the same outline as Grimmett.
Thus, for this section, to simplify the notation, we write $\phi^{\Phi_\Lambda^f(\xi)}_{\Lambda, p,q}=:\phi^\xi_{\Lambda, p,q}$ for all $\xi \in \Omega$ and $\Lambda \Subset G$.

Before proceeding to prove the theorem, we develop some of the notations. Let $\Lambda \subset \Delta$ be finite subgraphs of $G$. Denote by $\partial \Lambda \leftrightarrow \partial \Delta$ the event that there is an open path from $\partial\Lambda$ to $\partial \Delta$.
A \textit{cutset} $S$ is a \textit{minimal} set of edges with the property that every path connecting $\Lambda$ and $\partial\Delta$ uses at least one edge from $S$. We define \textit{the interior of a cutset} $S$, denoted $\operatorname{int}(S)$, to be the set of all edges which have an end-vertex $x$ with the property that $x$ cannot be connected to $\partial \Delta$ without using an edge of $S$. It can be easily verified that the following defines a partial order on the family of cutsets, 
\begin{equation}
	S_1 \leq S_2 \iff \tilde{S_1} \subset \tilde{S_2} \quad \text{ where } \tilde{S} := S \cup \operatorname{int}(S). 
\end{equation}

The approach is somewhat similar to the disagreement percolation introduced in \cite{vdBerg} where the author proves the uniqueness of Gibbs measures for Markov fields. Here, we don't have Markov property, but we have the `free-boundary Markov property' which we adapt in the proof. This property means that conditionally on the fact that a subset of edges is closed, the connected components of the complement are independent. Clearly, this holds for all FK measures.

We prove a coupling lemma which is the main ingredient of the proof. Here we use the comparison inequalities and the `free-boundary Markov property':
\begin{lemma}
	Let $\Lambda, \Delta, \Sigma$ be boxes such that $\Lambda \subseteq \Delta \subseteq \Sigma$ and $\tau,  \xi \in \Omega$. Then there exists a probability measure $\Psi_{\Sigma}$ on $\{0,1\}^{E(\Sigma)} \times \{0,1\}^{E(\Sigma)} \times \{0,1\}^{E(\Sigma)}$ such that the following conditions are satisfied: 
	\begin{enumerate}
		\item The set of triplets $(\omega_1, \omega_2, \omega_3)$ with $\omega_2 \leq \omega_3$ and $\omega_1 \leq \omega_3$ has probability 1,
		\item The first marginal of $\Psi_{\Sigma}$ is $\phi^\xi_{\Sigma, p, q}$, the second one restricted to $E_{\Delta}$ is $\phi^\tau_{\Delta, p, q}$ and the third is the product measure $\bbP^{\Sigma}_{p'}$,
		\item Let $M$ denote the maximal cutset of $\Delta$ (with respect to $\Lambda$) all of whose edges are closed in $\omega_3$. Note that $M$ exists iff $\omega_3 \in \{ \partial\Lambda \not\leftrightarrow \partial\Delta\}$. Conditional on $M$, the marginal law of both $\{\omega_1(e): e \in \operatorname{int}(M)\}$ and $\{\omega_2(e): e \in \operatorname{int}(M)\}$ is the free measure $\phi^0_{\operatorname{int}(M), p, q}$.
	\end{enumerate}
\end{lemma}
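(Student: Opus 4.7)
The plan is to construct $\Psi_\Sigma$ as the stationary distribution of a common-update heat-bath (Glauber) Markov chain on triples $(\omega_1,\omega_2,\omega_3) \in (\{0,1\}^{E(\Sigma)})^3$ respecting the domination $\omega_1,\omega_2 \leq \omega_3$. At each step, pick an edge $e \in E(\Sigma)$ uniformly and an independent uniform $U \in [0,1]$, and update
\begin{equation*}
\omega_3(e) \leftarrow \ind{U<p'}, \qquad \omega_i(e) \leftarrow \begin{cases} \ind{U<p} & \text{if } (\omega_i)_{\langle e\rangle} \in K_e^{\alpha_i}, \\ \ind{U<p'} & \text{otherwise}, \end{cases}
\end{equation*}
consistently with \eqref{eq:conditional_probas}, where $\alpha_1 = \xi$ and $\alpha_2 = \tau$ (the latter only for $e \in E(\Delta)$; for $e \in E(\Sigma) \setminus E(\Delta)$ simply set $\omega_2(e) \leftarrow \omega_3(e)$). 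Since $q<1$ gives $p \leq p'$, each FK update returns a value no larger than $\ind{U<p'} = \omega_3(e)$, so the ordering is preserved throughout. The chain is irreducible on a finite state space, and standard Gibbs-sampler theory identifies its marginal stationary distributions as $\phi^\xi_{\Sigma,p,q}$, $\phi^\tau_{\Delta,p,q}$ on $E(\Delta)$, and $\bbP^\Sigma_{p'}$, yielding items (1) and (2).

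For item (3), the essential observation is that for each possible cutset $M_0$, the event $\{M = M_0\}$ is measurable with respect to $\omega_3|_{E(\Delta)\setminus\operatorname{int}(M_0)}$: it requires the edges of $M_0$ to be closed and no cutset $M' > M_0$ to have all edges closed, and every such $M'$ lies entirely outside $\operatorname{int}(M_0)$. Consequently, conditional on $\{M = M_0\}$, the restriction $\omega_3|_{\operatorname{int}(M_0)}$ retains its unconditional product Bernoulli law at parameter $p'$. Combined with the domination $\omega_1 \leq \omega_3$ --- which forces $\omega_1 = 0$ on $M_0$, making $M_0$ a closed cutset in $\omega_1$ as well --- the connectivity event $K_e^\xi$ for any $e \in \operatorname{int}(M_0)$ depends only on $\omega_1|_{\operatorname{int}(M_0)}$, since no open path can cross a closed cutset. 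Marginalising the shared uniform $U$ against $\omega_3(e)$ for $e \in \operatorname{int}(M_0)$, the induced single-edge update on $\omega_1(e)$ reduces exactly to the heat-bath for the free-boundary FK measure on $\operatorname{int}(M_0)$; by uniqueness of the stationary distribution of this chain, $\omega_1|_{\operatorname{int}(M_0)} \sim \phi^0_{\operatorname{int}(M_0),p,q}$ conditional on $M = M_0$, and the same argument handles $\omega_2$.

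The main technical obstacle is precisely this reduction step: under $\Psi_\Sigma$, conditional on $M$, the coupling between $\omega_1|_{\operatorname{int}(M)}$ and $\omega_3|_{\operatorname{int}(M)}$ induced by the shared uniforms must be shown to wash out once one marginalises over $\omega_3|_{\operatorname{int}(M)}$, leaving the effective chain on $\omega_1|_{\operatorname{int}(M)}$ equal to the free-boundary FK heat-bath. This rests on the measurability of $M$ off $\operatorname{int}(M)$, the cutset property preventing $K_e^\xi$ from probing outside $\operatorname{int}(M)$, and the uniqueness of the FK stationary measure on a finite graph --- each a short consequence of the definitions, but collectively demanding careful bookkeeping around the common randomness that the Glauber coupling introduces between $\omega_1$ and $\omega_3$.
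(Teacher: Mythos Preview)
Your Glauber-dynamics coupling certainly gives (1) and (2): the shared-uniform heat bath is the standard monotone coupling, and the marginals are identified correctly. The gap is entirely in (3). You correctly isolate the obstacle --- conditioning on $M=M_0$ is conditioning on $\omega_3|_{E(\Delta)\setminus\operatorname{int}(M_0)}$, and you need this not to bias $\omega_1|_{\operatorname{int}(M_0)}$ --- but the resolution you sketch does not close it. The domain Markov property of $\phi^\xi_{\Sigma,p,q}$ tells you that \emph{given $\omega_1|_{M_0}=0$}, the inside of $\omega_1$ has the free law; however, $\{M=M_0\}$ is strictly more information than $\{\omega_1|_{M_0}=0\}$: it is information about $\omega_3$ outside $\operatorname{int}(M_0)$, hence about the last-update uniforms there, and in the stationary Glauber coupling those uniforms feed back into the history of $\omega_1$ everywhere. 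Put differently, you would need $\omega_1|_{\operatorname{int}(M_0)}$ to be conditionally independent of $\omega_3|_{\operatorname{int}(M_0)^c}$ given $\omega_1|_{M_0}=0$, and the stationary heat-bath coupling simply does not have this conditional independence. Your appeal to ``uniqueness of the stationary distribution'' of a free-boundary chain on $\operatorname{int}(M_0)$ is circular: conditioning a stationary Markov chain on a non-invariant event does not give a Markov chain, let alone the free-boundary one.

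The paper's proof avoids this by \emph{not} using a stationary coupling at all. It builds $(\omega_1,\omega_2,\omega_3)$ sequentially by exploring from $\partial\Sigma$ inward, at each step sampling one new edge $e_{j_{r+1}}$ adjacent to the already-explored cluster of $\partial\Sigma$ in $\omega_3$, using the conditional domination $\phi^{\tilde\xi_r}_{\Sigma\setminus S_r,p,q}\preceq\bbP_{p'}$. When the exploration halts, the set $F_R$ of discovered closed edges is a cutset, and crucially \emph{nothing inside has been sampled yet}; the interior is then filled with fresh randomness from $\phi^0_{\operatorname{int}(F_R),p,q}$, for both $\omega_1$ and $\omega_2$. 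Item (3) then holds by construction, because the inside was sampled after $M$ was determined and independently of how it was determined. This is exactly what the Glauber coupling cannot arrange: there, every edge is resampled infinitely often, so the inside and outside are entangled through the shared uniforms at all times. The paper in fact remarks later that Glauber-type couplings ``do not verify the free-boundary Markov property,'' which is precisely the property you need for (3).
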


We show how the theorem is a consequence of the above lemma:
\begin{proof}[Proof of Theorem \ref{thm:uniqueness_Grimmett} using the lemma]
	Let $A \in \mathcal{F}_{\Lambda}$; note the following consequence of Bayes' theorem:
	
	\begin{multline*}
		\Psi_{\Sigma}(\omega_1 \in A) = \Psi_{\Sigma}(\omega_1 \in A| \omega_3 \in \{ \partial\Lambda \not\leftrightarrow \partial\Delta\})\Psi_{\Sigma}(\omega_3 \in \{ \partial\Lambda \not\leftrightarrow \partial\Delta\}) + \\\Psi_{\Sigma}(\omega_1 \in A| \omega_3 \in \{ \partial\Lambda \leftrightarrow \partial\Delta\})\Psi_{\Sigma}(\omega_3 \in \{ \partial\Lambda \leftrightarrow \partial\Delta\}), 
	\end{multline*}
	
	\begin{multline*}
		\Psi_{\Sigma}(\omega_2 \in A) = \Psi_{\Sigma}(\omega_2 \in A| \omega_3 \in \{ \partial\Lambda \not\leftrightarrow \partial\Delta\})\Psi_{\Sigma}(\omega_3 \in \{ \partial\Lambda \not\leftrightarrow \partial\Delta\}) + \\\Psi_{\Sigma}(\omega_2 \in A| \omega_3 \in \{ \partial\Lambda \leftrightarrow \partial\Delta\})\Psi_{\Sigma}(\omega_3 \in \{ \partial\Lambda \leftrightarrow \partial\Delta\}).
	\end{multline*}
	
	Using part (3) of the Lemma, we see that the first terms in the above two expansions are equal (by conditioning again on $M$). Thus the difference is less than $\Psi_{\Sigma}(\omega_3 \in \{ \partial\Lambda \leftrightarrow \partial\Delta\})$. Hence, using parts (1), and (2) of the lemma about the marginals of $\Psi_{\Sigma}$ we get that, 
	\begin{equation}\label{eq:domination_difference}
		\left|\phi^\xi_{\Sigma, p, q}(A) - \phi^\tau_{\Delta, p, q}(A)\right| \leq \bbP^{\Sigma}_{p'}(\omega_3 \in \{ \partial\Lambda \leftrightarrow \partial\Delta\}).
	\end{equation}
	Now, we will use this to prove the uniqueness of the weak limit. We know that the set of weak limits is nonempty, let $\rho$ be such a measure. By definition, there exists an external configuration $\tau$ and an increasing sequence of boxes $\Delta_n$ such that $\phi^{\tau}_{\Delta_n,p,q} \implies \rho$ (weakly) as $n \to \infty$. Suppose that $\rho^{\prime} \neq \rho$ is also a weak limit, that is there is a $\xi$ and an increasing sequence of boxes, $\Sigma_n$ such that $\phi^{\xi}_{\Sigma,p,q} \implies \rho^{\prime}$ as $n \to \infty$. Let $A$ be a local event measurable with respect to some finite (fixed) box $\Lambda$ and let $m$ be such that $\Lambda \subset \Delta_m$ and $n = n_m$ be such that $\Delta_m \subset \Sigma_{n_m}$. By equation \eqref{eq:domination_difference}, we have for all $n > n_m$,
	\begin{equation*}
		\left|\phi^\xi_{\Sigma_{n}, p, q}(A) - \phi^\tau_{\Delta_m, p, q}(A)\right| \leq \bbP^{\Sigma_n}_{p'}( \partial\Lambda \leftrightarrow \partial\Delta_m).
	\end{equation*}
	Now letting $n \to \infty$ and then $m \to \infty$ gives that $\rho(A) = \rho^{\prime}(A)$ (recall that $p'$ is subcritical by hypothesis, so the right-hand side goes to 0). Since this is true for all cylinder events (which are local), we conclude that $\rho = \rho^{\prime}$, which is a contradiction. Hence $\rho$ is the unique weak limit measure. Now let us show the uniqueness of the Gibbs measure, which is an easy consequence of the above, and the subcriticality of $p'$. 
	Let $\phi$ be a Gibbs measure; then for every box $\Delta$, by the DLR equations, we have that 
	\begin{equation*}
		\phi(A \mid \mathcal{F}_{\Delta})(\xi) = \phi^\xi_{\Delta, p, q}(A) \quad \phi-\text{a.s.}
	\end{equation*}
	Thus using this and the bounded convergence theorem we get that, 
	\begin{align*}
		|\phi(A) - \rho(A)| &= \lim |\phi(\phi(A|\mathcal{F}_{\Delta_m})) - \phi^\tau_{\Delta_m, p, q}(A)|\\
		&\leq \lim \bbP^{\Delta_m}_{p'}( \partial\Lambda \leftrightarrow \partial\Delta_m) = 0.
	\end{align*}
	In the last equality, we have used the subcriticality of $p'$. This means that the set of DLR measures is singleton $\{\rho\}$.  Thus we have proved uniqueness for the $p' < p_c(G)$ regime.
\end{proof}

\begin{proof}[Sketching the proof of Lemma 2.1]
	We use an exploration method. Let $(e_l : l = 1, \cdots, L)$ be a deterministic ordering of edges in $E_{\Sigma}$. We build our configurations inward, starting from the boundary of $\Sigma$.  Let $e_{j_1}$ be the first edge incident on the boundary of $\Sigma$. We know by \eqref{eq:comparison_inequalities} that $\phi^\xi_{\Sigma, p, q} \preceq \bbP^{\Sigma}_{p'}$ and also that $\phi^\tau_{\Delta, p, q}\preceq \bbP^{\Delta}_{p'}$.
	Thus we can choose $\{0,1\}-$valued random variables $\omega_1(e_{j_1})$ and $\omega_3(e_{j_1})$ such that $\omega_1(e_{j_1})\leq \omega_3(e_{j_1})$ and they have the first and the second distributions. Set $S_1 = \{e_{j_1}\}$ and $\tilde{\xi}_1 = \xi \cup \{e_{j_1} \text{ has the state } \omega_1(e_{j_1}) \}$. So we `evolve' the boundary conditions at each step. 
	After stage $r$, we have information about $\tilde{\xi}_r$; note that by the uniformity of comparison inequalities in boundary conditions, we also have that $\phi^{\tilde{\xi}_r}_{\Sigma \setminus \{e_{j_1}, \cdots, e_{j_r}\}, p, q} \preceq \bbP^{\Sigma \setminus \{e_{j_1}, \cdots, e_{j_r}\}}_{p'}$, thus we may choose (given $e_{j_{r+1}}$) $\{0,1\}-$valued random variables $\omega_1(e_{j_{r+1}})$ and $\omega_3(e_{j_{r+1}})$ such that $\omega_1(e_{j_{r+1}})\leq \omega_3(e_{j_{r+1}})$ and they have the first and the second distributions (to observe that the joint distributions are as we want, note that putting additional boundary conditions on some edges is equivalent to conditioning the values of the configuration on those edges).
	Now we give a way to choose the sequence of edges and define the process in detail. The idea is to explore the ``open clusters of $\partial \Sigma$''. Let $e_{j_1}$ be defined as above; at step $r$, let, $S_r = \{e_{j_{s}}: s \in \{1, \cdots,r\}\}$, and $K_r = \{x \in \Sigma: x \text{ is connected to } \partial \Sigma \text{ by a path  which is open in }\omega_3\}$. Let $e_{j_{r+1}}$ be the first edge not in $S_r$ but possessing an end-vertex in $K_r$ (note that $\partial \Sigma \subset K_r$).
	Say that the process comes to a halt at stage $R$, and let $F_R$ denote the (random) set of closed edges. Note that $F_R$ consists precisely of those edges which have at least one end-vertex in $K_R$ and have been determined to be closed in $\omega_3$ (consequently in $\omega_1$).
	Now, when extending the configuration inside, the only relevant information gathered is that the edges on the inner boundary of explored edges are all closed. Consequently we can set the the remaining edges inside according to the free boundary condition on the set $F_R$, meaning they are equal in the interior.
	This is where we make use of the free-boundary Markov property.
	We remark that we can include $\omega_2$ in this exploration because of the stochastic inequality and the fact that we do not want $\omega_1$ and $\omega_2$ satisfying any other relation except for the domination, to conclude the proof.     
\end{proof}

As presented above, the proof of uniqueness for the `$p' < p_c(G)$' regime follows roughly the following idea: the random cluster measure is dominated by a subcritical Bernoulli percolation. Consequently, if we consider an FK measure on a large enough box for any fixed local event, then by subcriticality the local event is surrounded by a cutset of closed edges. Thus, the local event `experiences' the free boundary condition inside the cutset. This uses the crucial fact that we have the free-boundary Markov property for the coupling of the FK measure with the subcritical percolation that dominates it. It goes in the direction of showing that every boundary condition (and consequently every Gibbs measure) can be approximated arbitrarily closely by an FK measure with a free boundary condition, and we argue that this implies uniqueness of the infinite volume measure.

Hence, it is natural to try to get stronger comparisons with product measures, in order to expand the known uniqueness regime. In the upcoming sections, we introduce a tool called Glauber dynamics and use it later to obtain such stronger stochastic dominations, we then use essential enhancement results to show that it can strictly improve the uniqueness regime.
However, as we shall see, the coupling we get via the Glauber dynamics between FK measures and a ``smaller'' product measure does not verify the free-boundary Markov property. Therefore, we cannot imitate the above proof by exploration and a different strategy is required.

\section{Enhanced comparison inequalities for general finite graphs}\label{Enhanced comparison inequalities for general finite graphs}

Let $G=(V,E)$ be a finite graph, $\partial G \subset V$ and $\alpha \in \Pi(\partial G)$ a boundary condition. Recall that for every $e \in E$,
\begin{equation}\label{eq:cond_probas}
	\phi^\alpha_{G,p,q}\left(\omega(e)=1 \mid \omega_{\langle e\rangle}\right)= \begin{cases}p & \text { if } \omega_{\langle e\rangle} \in K^\alpha_e \\ p' & \text { if } \omega_{\langle e\rangle} \notin K^\alpha_e\end{cases}
\end{equation}
where $p'=\frac{p}{p+q(1-p)}$ and $K_e^\alpha$ is the event that the endpoints of $e$ are connected by an open path not using $e$. 
We will deduce Theorem \ref{thm:enhanced_comparison_ineq} from the two following propositions.

For all $e\in E$, a subset $\chi \subset E \setminus \{e\}$ of edges is called a \emph{cutset} for $e$ if the endpoints of $e$ are in two different connected components in $G\cup\alpha -(\chi \cup \{e\})$; equivalently, each cycle in $G\cup\alpha$ passing through $e$ contains an edge in $\chi$. Observe that if all the edges of a cutset for $e$ are closed, then the event $K_e^\alpha$ is not achieved.
In fact, the existence of a closed cutset is exactly the complementary event of $K_e^\alpha$.

\begin{proposition}\label{prop:below_enhancement}
	Let $\widehat{E} \subset E$ be such that for every edge $e\in \widehat{E}$, there exists a cutset for $e$ included in $E \setminus \widehat{E}$. Denote by $Q_{\widehat{E}}(e)$ the collection of such cutsets, and for all $e \in \widehat{E}$, let $\widehat{\varepsilon}_e:=|p'-p| \PP^G_{\max(p,p')}(\exists \chi \in Q_{\widehat{E}}(e) , \chi \text{ is closed})$.
	Then,
	\begin{enumerate}
		\item If $p<p'$, then $\PP^G_{p + \widehat{\varepsilon}_e \ind{e \in \widehat{E}}} \preceq \phi_{G,p,q}^\alpha$.
		\item If $p>p'$, then $\phi^\alpha_{G,p,q} \preceq \PP^G_{p - \widehat{\varepsilon}_e \ind{e \in \widehat{E}}}.$
	\end{enumerate}
	
\end{proposition}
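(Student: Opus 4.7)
The plan is to prove (1), the case $p < p'$; statement (2) follows symmetrically under the involution $p \leftrightarrow p'$. Writing $p_e := p + \widehat{\varepsilon}_e \ind{e \in \widehat{E}}$, I aim to exhibit a monotone coupling $(\omega', \omega)$ of $\PP^G_{(p_e)}$ and $\phi^\alpha_{G,p,q}$ with $\omega' \leq \omega$ almost surely; by Strassen's theorem, such a coupling is equivalent to the desired stochastic domination.

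First I would verify the one-edge marginal inequality, both as a sanity check and because the form of $\widehat{\varepsilon}_e$ drops out of it. From \eqref{eq:cond_probas} one integrates to
\[\phi^\alpha(\omega(e) = 1) = p + (p'-p)\,\phi^\alpha(\neg K_e^\alpha).\]
Every $\chi \in Q_{\widehat{E}}(e)$ is contained in $E \setminus \widehat{E}$, and if $\chi$ is entirely closed then $\neg K_e^\alpha$ holds automatically; the event $\{\exists \chi \in Q_{\widehat{E}}(e) \text{ entirely closed}\}$ is decreasing in $\omega$, so the classical comparison $\phi^\alpha \preceq \PP^G_{p'}$ gives
\[\phi^\alpha(\neg K_e^\alpha) \geq \phi^\alpha(\exists \chi \text{ closed}) \geq \PP^G_{p'}(\exists \chi \text{ closed}),\]
and hence $\phi^\alpha(\omega(e) = 1) \geq p + \widehat{\varepsilon}_e$.

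To lift this marginal estimate to a full monotone coupling, I would run a modified Glauber-type dynamics on pairs $(\omega', \omega)$ living in $\{\omega' \leq \omega\}$. The standard heat-bath coupling, in which a single uniform $U_e$ drives both updates, preserves the order whenever $p_e \leq c(\omega_{\langle e \rangle})$, which is automatic for $e \notin \widehat{E}$ but fails for $e \in \widehat{E}$ in any configuration with $K_e^\alpha$ present (where $c = p < p_e$). The modification is to introduce auxiliary randomness and to transfer the ``extra'' mass $\widehat{\varepsilon}_e$ of $\omega'(e) = 1$ onto the witness event that $\omega|_{\widehat{E}^c}$ already exhibits a closed cutset in $Q_{\widehat{E}}(e)$. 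Inside this witness event the FK conditional at $e$ is forced to be $p'$, so the opening of $\omega'(e)$ is safely matched by $\omega(e)$ without interfering with the correct $\omega$-side update.

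The main obstacle is realizing three invariants simultaneously: (i) the $\omega$-marginal equals $\phi^\alpha$, (ii) the $\omega'$-marginal is the independent product $\PP^G_{(p_e)}$, and (iii) the order $\omega' \leq \omega$ is preserved. Property (i) is dictated by the $\omega$-side rule and (iii) by the trigger just described; the delicate point is (ii), since the trigger depends on $\omega$ and would naively create correlations in $\omega'$. The key fact that makes it work is that witnesses live entirely in $\widehat{E}^c$ and $\{\exists \chi \text{ closed}\}$ is a decreasing event, so the comparison $\phi^\alpha \preceq \PP^G_{p'}$ makes the trigger fire under $\phi^\alpha$ at least as often as the target rate $\PP^G_{p'}(\exists \chi \text{ closed})$ --- exactly the calibration producing $\widehat{\varepsilon}_e = (p'-p)\,\PP^G_{p'}(\exists \chi \text{ closed})$. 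The surplus firings on the $\omega$-side are absorbed by the monotonicity slack, and running the joint dynamics to its stationary distribution yields the required coupling.
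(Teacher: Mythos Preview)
Your overall framework is right --- a modified Glauber dynamics with a coupled lower process, and a trigger based on the presence of a closed cutset among the auxiliary edges --- and your one-edge marginal computation is correct. But the sketch has a genuine gap at exactly the point you flag as delicate: ensuring that the stationary marginal of $\omega'$ is the \emph{exact} product measure $\PP^G_{(p_e)}$, not merely something it dominates.

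The problem is that you read the witness event (existence of a closed auxiliary cutset) in the configuration $\omega$, which has law $\phi^\alpha$. This creates two difficulties. First, the probability that the trigger fires is $\phi^\alpha(\exists\,\chi \text{ closed})$, which is only \emph{at least} $\PP^G_{p'}(\exists\,\chi \text{ closed})$, not equal to it; thinning it down with an extra coin flip would require knowing $\phi^\alpha$-probabilities that you have no explicit access to. Second, and more seriously, even if you could calibrate the rate correctly edge by edge, the trigger events for different enhanced edges $e,e'$ are correlated through the (non-product) configuration $\omega$ on $\widehat{E}^c$, so the resulting $\omega'(e),\omega'(e')$ will not be independent. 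Your phrase ``surplus firings absorbed by the monotonicity slack'' concerns only the ordering $\omega' \leq \omega$; it says nothing about the independence of the $\omega'$-coordinates.

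The paper resolves both issues with two ingredients you are missing. It introduces a \emph{third} process $Z_t$ with $Y_t \leq X_t \leq Z_t$, whose stationary law is the product measure $\PP^G_{p'}$, and it reads the witness event in $Z_{t-}$ rather than in $X_{t-}$: since $Z_{t-} \geq X_{t-}$, a closed cutset in $Z_{t-}$ forces $\neg K_e^\alpha$ in $X_{t-}$, so the ordering is preserved, and the trigger probability is now \emph{exactly} the one appearing in the definition of $\widehat{\varepsilon}_e$. To kill the residual correlation between successive enhanced-edge updates, the paper also resamples \emph{all} auxiliary edges (in all three processes, via the same uniforms) immediately after each enhanced update; this refreshes $Z$ on $\widehat{E}^c$ to fresh i.i.d.\ $p'$-Bernoullis, so the next trigger is independent of the previous one, and the stationary law of $Y$ is exactly the desired product. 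These two devices --- reading the witness in the dominating product process, and the post-update refresh of the auxiliary edges --- are the mechanism your sketch is missing.
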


\begin{proposition}\label{prop:above_enhancement}
	Let $\widetilde{E} \subset E$ be such that for every edge $e \in  \widetilde{E}$, there exists a path connecting both endpoints of $e$ included in $E \setminus \widetilde{E}$.
	Denote by $\Gamma_{\widetilde{E}}(e)$ the collection of such paths,
	and for all $e \in \widetilde{E}$, let $\widetilde{\varepsilon}_e:=|p'-p| \PP^G_{\min(p,p')}(\exists \gamma \in \Gamma_{\widetilde{E}}(e), \gamma \text{ is open})$.
	Then,
	\begin{enumerate}
		\item If $p<p'$, then $\phi_{G,p,q}^\alpha \preceq \PP^G_{p'-\widetilde{\varepsilon}_e \ind{e \in \widetilde{E}}}$.
		\item If $p>p'$, then $\PP^G_{p' + \widetilde{\varepsilon}_e \ind{e \in \widetilde{E}}} \preceq \phi_{G,p,q}^\alpha.$
	\end{enumerate}
	
\end{proposition}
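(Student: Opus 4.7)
The statement is the dual of Proposition~\ref{prop:below_enhancement}, and I propose to prove it by a dual modification of the Glauber dynamics. I treat item~(1), for the case $p<p'$; item~(2) follows by swapping the roles of $\omega=0$ and $\omega=1$ and replacing the path condition on $\widetilde{E}$ by the cutset condition of Proposition~\ref{prop:below_enhancement}. The observation driving the enhancement is dual to that for Proposition~\ref{prop:below_enhancement}: for every $e \in \widetilde{E}$, the event $A_e := \{\exists\, \gamma \in \Gamma_{\widetilde{E}}(e),\ \gamma\text{ open}\}$ is measurable with respect to $\omega|_{E\setminus\widetilde{E}}$ and, whenever it holds in $\omega_{\langle e\rangle}$, forces $K_e^\alpha$ and thereby collapses the FK conditional probability of $\omega(e)=1$ from the generic value $p'$ down to $p$. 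Since $A_e$ is increasing, the classical lower comparison $\PP^G_p \preceq \phi^\alpha_{G,p,q}$ yields $\phi^\alpha(A_e) \ge \PP^G_p(A_e)$, and so on average the naive bound $\phi^\alpha(\omega(e)=1) \le p'$ tightens to $\phi^\alpha(\omega(e)=1) \le p' - \widetilde{\varepsilon}_e$, with $\widetilde{\varepsilon}_e = (p'-p)\PP^G_p(A_e)$.

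To promote this single-edge bound to a genuine stochastic domination, I build a modified Glauber dynamics on triples $(\xi,\omega,\eta)$ with the invariant $\xi \le \omega \le \eta$. At each step pick a uniform edge $e$ and a uniform $U \in [0,1]$, and update the three coordinates as follows: for $e \notin \widetilde{E}$, use the thresholds $p$, $\phi^\alpha_{G,p,q}(\omega(e)=1 \mid \omega_{\langle e\rangle})$, and $p'$ respectively; for $e \in \widetilde{E}$, first test $A_e(\xi_{\langle e\rangle})$. If it holds, then $\xi \le \omega$ propagates it to $A_e(\omega)$, forcing the FK conditional at $e$ to equal $p$, and I use the common threshold $p$ for all three coordinates; otherwise, I fall back on the generic thresholds $p$, FK-conditional, $p'$. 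By construction $\xi \le \omega \le \eta$ is preserved step by step, the $\xi$- and $\omega$-marginals at stationarity are $\PP^G_p$ and $\phi^\alpha_{G,p,q}$, and for $e \in \widetilde{E}$ a one-edge computation gives the marginal $\PP(\eta(e)=1) = \PP^G_p(A_e)\cdot p + (1-\PP^G_p(A_e))\cdot p' = p' - \widetilde{\varepsilon}_e$.

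The main obstacle I anticipate is identifying the \emph{joint} law of $\eta$ at stationarity with the product measure $\PP^G_{p'-\widetilde{\varepsilon}_e\ind{e \in \widetilde{E}}}$ rather than merely with a law having the correct single-edge marginals: the threshold used for $\eta(e)$ is picked by a function of $\xi|_{E\setminus\widetilde{E}}$, which is shared across $\widetilde{E}$ and can introduce spurious correlations. I plan to remove these correlations by refreshing the auxiliary at every update time $t$: sample an independent copy $\xi^{(t)}$ from the optimal monotone coupling with the current $\omega(t)$ provided by the comparison $\PP^G_p \preceq \phi^\alpha_{G,p,q}$ (this stochastic dominance being preserved for the whole dynamics by the invariant $\xi \le \omega$). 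Because the $\xi^{(t)}$ are independent across time, their contribution to the stationary distribution of $\eta$ factorises edge-by-edge and delivers the genuine product structure; the instantaneous inequality $\xi^{(t)} \le \omega(t)$ keeps the implication $A_e(\xi^{(t)}) \Rightarrow K_e^\alpha$ valid so that $\omega(e) \le \eta(e)$ is maintained at each step. Once this refinement is carried out, ergodicity of the joint dynamics delivers a coupling witnessing $\phi^\alpha_{G,p,q} \preceq \PP^G_{p'-\widetilde{\varepsilon}_e\ind{e \in \widetilde{E}}}$, which is item~(1); item~(2) is entirely analogous with cutsets in place of paths.
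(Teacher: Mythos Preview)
Your overall architecture matches the paper's: run a Glauber-type dynamics on a triple $(\xi,\omega,\eta)$ with $\xi\le\omega\le\eta$, detect an open auxiliary path in the lower process to force $K_e^\alpha$ in $\omega$, and thereby lower the threshold used for $\eta(e)$. You also correctly identify the obstacle, namely that the path-detection events for different enhanced edges may be correlated, which would prevent $\eta$ from being a product measure.

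However, your proposed fix does not close the gap. You sample $\xi^{(t)}$ at each update time from a monotone coupling \emph{conditionally on the current $\omega(t)$}. Such a $\xi^{(t)}$ has the correct marginal $\PP^G_p$ only after averaging over $\omega(t)$; for two enhanced edges $e,e'$ with last update times $t_e\neq t_{e'}$, the indicators $\ind{A_e(\xi^{(t_e)})}$ and $\ind{A_{e'}(\xi^{(t_{e'})})}$ are conditionally independent given the $\omega$-trajectory but not unconditionally, because $\omega(t_e)$ and $\omega(t_{e'})$ are correlated through the FK dynamics. Hence your claim that ``the $\xi^{(t)}$ are independent across time'' is false, and the stationary law of $\eta$ need not be product. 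The paper resolves this differently: after \emph{every} update of an enhanced edge, it resamples \emph{all} auxiliary edges in all three processes $X,Y,Z$ using fresh shared uniforms. This has two effects: (i) $Y$ remains $\PP^G_{\min(p,p')}$ on auxiliary edges at all times, and (ii) the auxiliary states of $Y$ read at time $t_e-$ and at time $t_{e'}-$ use disjoint collections of uniforms, hence are genuinely independent. Because $X$ is also resampled (via its own Glauber rule) with the same uniforms, $\phi^\alpha_{G,p,q}$ remains stationary for $X$ and the ordering $Y\le X\le Z$ is preserved. This simultaneous resampling is the missing ingredient in your sketch.

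A minor point: your closing remark that item~(2) ``is entirely analogous with cutsets in place of paths'' is incorrect. Item~(2) of Proposition~\ref{prop:above_enhancement} still uses the open-path condition; only the roles of $Y$ and $Z$ swap (when $p>p'$ the enhancement lands on $Y$ rather than $Z$). Cutsets belong to Proposition~\ref{prop:below_enhancement}.
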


\begin{remark}
	Notice that in Proposition \ref{prop:below_enhancement}, the condition on $\widehat{E}$ is satisfied if $\widehat{E}$ never contains two consecutive edges.
	In Proposition \ref{prop:above_enhancement}, the condition on $\widetilde{E}$ is satisfied if $(G, E \setminus \widetilde{E})$ is connected.
\end{remark}

Before proving Proposition \ref{prop:below_enhancement} and \ref{prop:above_enhancement}, we introduce the main tool of both proofs, that is the Glauber dynamics of the random cluster model.
To each edge $e \in E$, we independently assign an i.i.d. sequence $\left(\xi_1^e, \xi_2^e, \ldots\right)$ of exponential random variables with mean 1, and an independent i.i.d. sequence $\left(U_1^e, U_2^e, \ldots\right)$ of uniform random variables on $[0,1]$.
The idea is to update the state of an edge when its `exponential clock rings' according to its conditional distribution knowing the rest of the configuration.
For $e \in E$ and $k=1,2, \ldots$, let $\tau_k^e:=\xi_1^e+\ldots+\xi_k^e$ so that $\left(\tau_1^e, \tau_2^e, \ldots\right)$ are the jump times of a Poisson process of rate 1, and will be the update times of edge $e$.

For every $\omega \in \{0,1\}^E$, define a continuous-time $\{0,1\}^E$-valued Markov chain $(X^\omega_t)_{t\geq 0}$ with initial configuration $X^\omega_0 = \omega$.
The evolution of $X^\omega$ is governed as follows:
the state of an edge $e$ cannot change except possibly at times $\tau^e_1, \tau_2^e, \dots$ and
at time $t=\tau_k^e$ for some $k\geq 1$, let
\begin{equation}\label{eq:update}
	X^\omega_t(e) =
	\begin{cases}
		1 & \text{ if } U_k^e < p \text{ and }X^\omega_{t-} \in K^\alpha_e \\
		1 & \text{ if } U_k^e < p' \text{ and }X^\omega_{t-} \notin K^\alpha_e \\
		0 & \text{ otherwise}
	\end{cases}.
\end{equation}

It is easy to see that this defines an irreducible and aperiodic Markov chain.
According to \eqref{eq:cond_probas}, one can observe that $\phi^G_{p,q}$ is invariant for this dynamics.
Therefore, we get that $X^\omega_t$ converges in distribution towards $\phi^\alpha_{G,p,q}$ as $t$ goes to infinity. 

Note that although the dynamics is not local ---one might need to explore the configuration far away from $e$ to determine if the event $K_e$ is satisfied or not---, it is well defined because the graph is finite.
Also, since we are working with exponential clocks which are continuous, we have that almost surely, $\tau_k^e \neq \tau_j^{e^{\prime}}$ for all $j, k$ when $e \neq e^{\prime}$, so that we are never updating two edges at the same time.

\begin{proof}[Proof of Proposition \ref{prop:below_enhancement}]
	Let $\widehat{E}$ be as in Proposition \ref{prop:below_enhancement}; edges of $\widehat{E}$ will be called \emph{enhanced edges}. The other edges are called \emph{auxiliary} edges.
	
	Define two Markov processes $(X_t)_{t\geq0}$ and $(Y_t, Z_t)_{t\geq0}$ taking values in the set of configurations and of pairs of configurations respectively; $(X_t)_{t\geq0}$ will be a modified version of the Glauber dynamics of the random cluster model introduced above, while $(Y_t,Z_t)_{t\geq0}$ will contain two approximations.
	They will be coupled in such a way that $X_t$ stays a.s. above $Y_t$ and below $Z_t$ for all $t>0$ for the usual partial order on $\{0,1\}^E$.
	Let $\omega \in \{0,1\}^E$ and set $X_0 = Y_0 = Z_0 = \omega$.
	As in the Glauber dynamics, for process $(X_t)_{t\geq0}$, edges are resampled at their update times following the rule \eqref{eq:update}. The only difference is that, after each resampling of an enhanced edge, we also resample all the auxiliary edges.
	Thus, for $e \in \widehat{E}$, in addition to the $(U^e_k)_{k\geq 1}$, let $(U^e_k[1], \dots U^e_k[A])_{k \geq 1}$ be a collection of i.i.d. random variables, distributed uniformly on $[0,1]$, where $A=|E \setminus \widehat{E}|$ is the number of auxiliary edges.
	Enumerate by $e_1, \dots, e_A$ the auxiliary edges. 
	
	Therefore, for every $e \in \widehat{E}$ and $k\geq 1$, at time $\tau_k^e$, we execute the following steps:
	\begin{enumerate}
		\item[(1)] Let $X'_0(e) =
		\begin{cases}
			1 & \text{ if } U_k^e < p \text{ and }X_{\tau_k^e-} \in K^\alpha_e \\
			1 & \text{ if } U_k^e < p' \text{ and }X_{\tau_k^e-} \notin K^\alpha_e \\
			0 & \text{ otherwise}
		\end{cases}$, and $X'_0 = X_{\tau^e_k-}$ outside $e$
		
		\item[(2)] Let $X'_1(e_1) =
		\begin{cases}
			1 & \text{ if } U_k^e[1] < p \text{ and }X'_0 \in K^\alpha_{e_1} \\
			1 & \text{ if } U_k^e[1] < p' \text{ and }X'_0 \notin K^\alpha_{e_1} \\
			0 & \text{ otherwise}
		\end{cases}$, and $X'_1 = X'_0$ outside $e_1$
		
		\item[\vdots]
		\item[$(A+1)$] Let $X'_A(e_A) =
		\begin{cases}
			1 & \text{ if } U_k^e[A] < p \text{ and }X'_{A-1} \in K^\alpha_{e_A} \\
			1 & \text{ if } U_k^e[A] < p' \text{ and }X'_{A-1} \notin K^\alpha_{e_A} \\
			0 & \text{ otherwise}
		\end{cases}$, and $X'_A = X'_{A-1}$ outside $e_A$;
	\end{enumerate}
	
	finally let $X_{\tau^e_k} = X'_A$ (whereas for $e \notin \widehat{E}$, we only execute step 1 and set $X_{\tau^e_k}=X_0'$).
	Since we update only the state of the edges according to their conditional distribution, it remains true that $\phi^\alpha_{G,p,q}$ is the stationary distribution of the process $(X_t)_{t\geq0}$.
	
	Now, define the evolution of $(Y_t,Z_t)_{t \geq 0}$, and check at the same time that each update preserves the order between the three configurations. Thus, suppose that $t=\tau_k^e$ is an update time of edge $e$, and we have $Y_{t-} \leq X_{t-} \leq Z_{t-}$.
	
	If $e$ is not an enhanced edge, let
	$Y_t(e)= \begin{cases}1 & \text { if } U^e_k< \min(p,p')  \\ 0 & \text { otherwise. }\end{cases}$ and $Y_t=Y_{t-}$ outside edge $e$.
	Similarly, let $Z_t(e)= \begin{cases}1 & \text { if } U^e_k< \max(p,p')  \\ 0 & \text { otherwise. }\end{cases}$ and $Z_t=Z_{t-}$ outside edge $e$.
	The update occurs at the same time as in process $X$, using the same uniform random variable, hence we get $Y_t(e) \leq X_t(e) \leq Z_t(e)$.
	Observe that here, if we do nothing different for the enhanced edges, we will find again \eqref{eq:comparison_inequalities} by letting $t$ go to infinity.
	
	Nevertheless, if $e \in \widehat{E}$, we allow to reveal the current configuration on the auxiliary edges to determine the new state of $e$.
	The key observation is that if we find a cutset for $e$ comprising only auxiliary edges that are closed in $Z_{t-}$, then this also holds for $X_{t-}$ and $Y_{t-}$, and we know that the event $K^\alpha_e$ does not hold for $X_{t-}$.
	Thus, $e$ is opened in $X_t$ if and only if $U^e_k < p'$, so we can also use the threshold $p'$ to define $Y_t(e)$ and $Z_t(e)$ while preserving the order.
	
	Thus, if there exists $\chi \in Q_{\widehat{E}}(e)$ such that $Z_{t-}(e') = 0$ for all $e' \in \chi$, let 
	$$Y_0'(e) = Z_0'(e) =
	\begin{cases}
		1 & \text{ if } U_k^e < p' \\
		0 & \text{ otherwise}
	\end{cases};$$
	otherwise, as for non-enhanced edges, let 
	$$Y_0'(e)=\begin{cases}1 & \text { if } U^e_k< \min(p,p')  \\ 0 & \text { otherwise. }\end{cases} \text{ and } Z_0'(e)=\begin{cases}1 & \text { if } U^e_k< \max(p,p')  \\ 0 & \text { otherwise. }\end{cases}$$
	and in both cases $Y'_0=Y_{t-}, Z'_0=Z_{t-}$ outside $e$.
	
	Then, in the same way as for process $(X_t)_{t \geq 0}$, resample all the edges of $E \setminus \widehat{E}$ using the $U^e_k[i]$ and the simple rule for non-enhanced edges (that is, open the edge when the uniform random variable is less than $\min(p,p')$ for $Y_t$ or $\max(p,p')$ for $Z_t$). This defines $Y_1', \dots, Y_A'$ and $Z_1', \dots, Z_A'$, and finally we keep $Y_t=Y'_A$ and $Z_t=Z'_A$.
	In each of these operations, the order has been preserved, so in the end we get
	$Y_t \leq X_t \leq Z_t$.
	
	We conclude the proof by computing the stationary distribution of $(Y_t,Z_t)_{t \geq 0}$, and by letting $t$ tend to infinity.
	It will differ depending on whether $p<p'$ or $p'<p$. We first handle the case $p<p'$.
	Let $Y$ and $Z$ be two random configurations, of law $\PP^G_{p + \widehat{\varepsilon}_e \ind{e \in \widehat{E}}}$ and $\PP^G_{p'}$ respectively, coupled in such a way that a.s. $Y \leq Z$ (this is possible to do so because $p+\widehat\varepsilon_e \leq p'$).
	We show that if $p<p'$, the stationary distribution of $(Y_t,Z_t)_{t \geq 0}$ is the law of $(Y,Z)$.
	In light of the coupling with $(X_t)_{t \geq 0}$, which has stationary distribution $\phi^\alpha_{G,p,q}$, it implies part (1) of the Proposition (note that here we actually do not need to know the marginal law of $Z$).
	
	Let us prove that the law of $(Y,Z)$ is indeed invariant under the updates of the dynamics. Suppose that $t$ is an update time of edge $e$ (that is $t=\tau^e_k$ for some $k$) and that $(Y_{t-}, Z_{t-})$ has the same law as $(Y,Z)$.
	If $e$ is an auxiliary edge, the update only produces a resampling of the state of $e$, and sets $e$ to be open in $Y_t$ if $U_k^e <p$, and in $Z_t$ if $U_k^e <p'$, independently of everything else. Then, $Y_t$ and $Z_t$ still have the good marginals, and verify $Y_t \leq Z_t$.
	Now, if $e \in \widehat{E}$, edge $e$ is open in $Y_t$ if the uniform random variable $U^e_k$ is less than $p$, or if it is between $p$ and $p'$ and there is a cutset for $e$ of auxiliary edges all closed in $Z_{t-}$; this happens with probability $p+\widehat{\varepsilon}_e$ (here we use the fact that in $Z_{t-}$, auxiliary edges are open independently with probability $p'=\max(p,p')$).
	More directly, $e$ is open in $Z_t$ if $U^e_k <p'$, and note that we still have $Y_t(e) \leq Z_t(e)$.
	The state of the edge $e$ in $Y_t$ and $Z_t$ is therefore a measurable function of $U^e_k$ and of the configuration on auxiliary edges at time $t-$. Furthermore, the states of the auxiliary edges are also resampled during the update while preserving the ordering, with the uniform variables $U^e_k[1], U^e_k[2], \dots$, so that the states of all the edges are independent after the update (and clearly the auxiliary edges are open independently with probability $p$ in $Y_t$ and $p'$ in $Z_t$).
	It follows that the law of $(Y,Z)$ is indeed preserved by this update. 
	Therefore, in the case $p<p'$, we have identified the stationary distribution of $(Y_t,Z_t)_{t \geq 0}$, so part (1) follows.
	
	The situation is reversed when $p>p'$, but the proof is the same. This time the stationary distribution is the law of $(Y,Z)$ where $Y$ has law $\PP^G_{p'}$ and $Z$ has law $\PP^G_{p - \widehat{\varepsilon}_e \ind{e \in \widehat{E}}}$, still coupled so that a.s. $Y \leq Z$.
	The only notable difference is that after an update, an enhanced edge $e$ is closed in $Z_t$ with probability $1-p +\widehat{\varepsilon}_e$ (that is, if the uniform variable is above $p$ or between $p'$ and $p$ and the condition on the auxiliary edges holds). This is why we get $\PP^G_{p - \widehat{\varepsilon}_e \ind{e \in \widehat{E}}}$ as second marginal, and part (2) of the Proposition follows.
\end{proof}

\begin{proof}[Proof of Proposition \ref{prop:above_enhancement}]
	The proof is very similar to the previous one, so we only point out the main differences.
	Enhanced edges are now the edges in $ \widetilde{E}$, and the others are the auxiliary edges.
	The processes $(X_t)_{t \geq 0}$ and $(Y_t,Z_t)_{t \geq 0}$ are defined similarly, differing only in the update rule of the enhanced edges for $(Y_t,Z_t)_{t \geq 0}$.
	Here, it should be observed that if it happens that all the edges of a path connecting both endpoints of $e \in \widetilde{E}$ (not using $e$) are open in $Y_{t-}$, then this also holds for $X_{t-}$ and $Z_{t-}$, so in particular $K^\alpha_e$ holds for $X_{t-}$. This means that $e$ is open in $X_t$ if and only if $U^e_k<p$, so this threshold can also be used for the update of $Y_t(e)$ and $Z_t(e)$ while preserving the order.
	So in $(Y_t,Z_t)_{t \geq 0}$, when we update an enhanced edge $e$ at time $t$, we reveal the states of the auxiliary edges; if we find a path of $\Gamma_{\widetilde{E}}(e)$ whose edges are all open in $Y_{t-}$, $e$ is set to be open if the uniform random variable is less than $p$; if we do not find such a path, $e$ is updated using the same rule as for non-enhanced edges; then, in any case, all the auxiliary edges are re-sampled, in an arbitrary order.
	
	Therefore, if $p>p'$, an update at time $t$ of an enhanced edge $e$ sets $e$ to be open in $Y_t$ if the uniform random variable is less than $p'$ or if it is between $p'$ and $p$ and there is an open path of $\Gamma_{\widetilde{E}}(e)$ in $Y_{t-}$. This occurs with probability $p'+\widetilde{\varepsilon}_e$. This update sets $e$ to be open in $Z_t$ with probability $p$. We deduce that the stationary distribution of $(Y_t,Z_t)_{t \geq 0}$ has marginals $\PP^G_{p'+\widetilde{\varepsilon}_e \ind{e \in \widetilde{E}}}$ and $\PP^G_p$ (and we still have the ordering between the two coupled configurations).
	As before the situation is reversed when $p<p'$.   
\end{proof}

\begin{proof}[Proof of Theorem \ref{thm:enhanced_comparison_ineq}]
	We assume that $G \cup \alpha$ is connected, is not a tree and contains at least two distinct vertices. This last hypothesis and the connectivity implies that there is at least one edge which is not a self-loop around a vertex, so that we can find a non-empty subset $\widehat{E}$ that satisfies the hypothesis of Proposition \ref{prop:below_enhancement}.
	Then, the fact that $G\cup \alpha$ is not a tree implies that there this at least one cycle in the graph, so that we can find a non-empty subset $\widetilde{E}$ that satisfies the hypothesis of Proposition \ref{prop:above_enhancement}. Then, since $q$ is assumed to be different from 1, we have $p \neq p'$ so $\widehat{\varepsilon}_e >0$ for all $e \in \widehat{E}$ and $\widetilde{\varepsilon}_e >0$ for all $e \in \widetilde{E}$.
	Therefore, Propositions \ref{prop:below_enhancement} and \ref{prop:above_enhancement} readily imply Theorem \ref{thm:enhanced_comparison_ineq}.
\end{proof}

It is crucial to well choose the subsets $\widehat{E}$ and $\widetilde{E}$ of $E$ to get a non-zero enhancement.
As an example let us work out the above proof for finite subgraphs of $\bbZ^2$.
The simplest example of a cutset for an edge $e$ would be to take all the other edges attached to one of the endpoint of $e$; if all these edges are auxiliary, i.e. not in $\widehat{E}$, then there is a cutset for $e$ included in $E\setminus \widehat{E}$.
In fact, it is possible to partition $\bbZ^2$ with devices (of 4 edges) so that one edge of each device can be included in $\widehat{E}$, and the other edges of the device form a cutset of the first edge. This can be done by taking as devices the 4 edges attached to any even vertex; these are called ``star-devices'' (see Figure \ref{fig:devices_Z^2}).
On the other hand, the simplest path connecting both endpoints of $e$ in $\bbZ^2$ would be to take three edges that form a square (i.e. a cycle of length 4 in the graph) together with $e$. One can therefore partition $\bbZ^2$ into ``square-devices'' (see Figure \ref{fig:devices_Z^2}); from each, one edge can be included in $\widetilde{E}$, whereas the three other edges make a path connecting both endpoints of the first edge, and they are set to be auxiliary.
Notice that its easier to think about the devices as partitioning the graph, having exactly one enhanced edge per device, but in fact the devices could share auxiliary edges. Indeed, in the proofs of Proposition \ref{prop:below_enhancement} and \ref{prop:above_enhancement}, after a resampling of an enhanced edge, we resample all the auxiliary edges, so that all the auxiliary edges can be used to give a bonus to the enhanced edge. Moreover, it is not necessary that the devices cover the full graph, only that they are sufficiently dense (see in the proof of Theorem \ref{thm:new_bound_pc(q)} below).

Therefore, if $G=(V,E)$ is a finite (connected) subgraph of $\bbZ^2$, one can choose $\widehat{E}$ to be composed of exactly one edge per star-device which is entirely included in $G$, with the additional requirement to never include in $\widehat{E}$ an edge whose both endpoints are in $\partial G$.
This is done to prevent a connection between the endpoints of an enhanced edge via the boundary condition. One can construct $\widetilde{E}$ similarly with the square-devices (one even not have to worry about a possible connection via the boundary condition, since it could only help to create of path not using enhanced edges).
This construction with the devices also yields lower bounds on the $\widehat{\varepsilon}_e$ and $\widetilde{\varepsilon}_e$:
for all $e\in \widehat{E}$, there is a closed cutset for $e$ included in $E \setminus \widehat{E}$ in particular if the three auxiliary edges of the star-device of $e$ are closed, so
$$\widehat \varepsilon_e \geq |p'-p| (1-\max(p,p'))^3.$$
Similarly, we get for all $e \in \widetilde{E}$
$$\widetilde{\varepsilon}_e \geq |p'-p| \min(p,p')^3.$$

\begin{figure}[ht]
	\centering
	\includegraphics[width=0.85\linewidth]{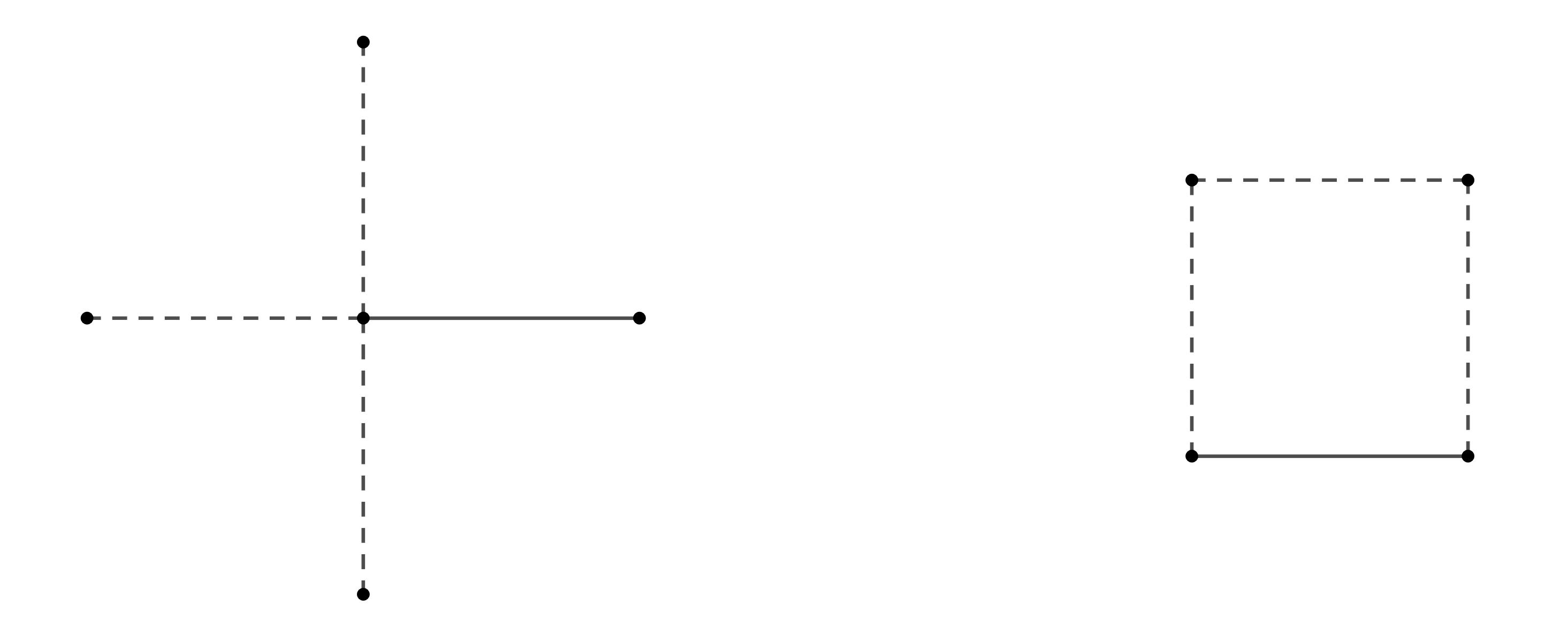}
	\caption{Left: a ``star-device''. Right: a ``a square-device''. The enhanced edge of the device is represented in unbroken line. In dotted line, we represent the other edges of the device, that ensure the presence of a cutset, or of a path connecting both endpoints, made of auxiliary edges only.}
	\label{fig:devices_Z^2}
\end{figure}

\begin{figure}[ht]
	\centering
	
	\begin{tikzpicture}[scale=0.45, line cap=round, line join=round]
		\def\W{5}\def\H{4}
		\draw[very thin, black!15] (-\W-0.2,-\H-0.2) grid (\W+0.2,\H+0.2);
		
		\newcommand{\onestar}[3]{
			\draw[dotted, thick, #3] (#1,#2) -- ++(0,1);
			\draw[dotted, thick, #3] (#1,#2) -- ++(0,-1);
			\draw[dotted, thick, #3] (#1,#2) -- ++(-1,0);
			\draw[thick, #3]         (#1,#2) -- ++(1,0);
			\fill[#3] (#1,#2) circle (1.3pt);
			\fill[#3] (#1+1,#2) circle (1.3pt);
			\fill[#3] (#1-1,#2) circle (1.3pt);
			\fill[#3] (#1,#2+1) circle (1.3pt);
			\fill[#3] (#1,#2-1) circle (1.3pt);
		}
		
		\foreach \x in {-5,-3,-1,1,3,5}{
			\foreach \y in {-5,-3,-1,1,3,5}{
				\onestar{\x}{\y}{red}
			}
		}
		\foreach \x in {-4,-2,0,2,4}{
			\foreach \y in {-4,-2,0,2,4}{
				\onestar{\x}{\y}{blue}
			}
		}
	\end{tikzpicture}
	\hspace{1cm}
	\begin{tikzpicture}[scale=0.45, line cap=round, line join=round]
		\def\W{5}\def\H{4}
		\draw[very thin, black!15] (-\W-0.2,-\H-0.2) grid (\W+0.2,\H+0.2);
		
		\newcommand{\onesq}[3]{
			\draw[thick,#3] (#1,#2) -- (#1+1,#2);
			\draw[dotted,thick,#3] (#1,#2) -- (#1,#2+1);
			\draw[dotted,thick,#3] (#1+1,#2) -- (#1+1,#2+1);
			\draw[dotted,thick,#3] (#1,#2+1) -- (#1+1,#2+1);
			\fill[#3] (#1,#2) circle (1.3pt);
			\fill[#3] (#1+1,#2) circle (1.3pt);
			\fill[#3] (#1,#2+1) circle (1.3pt);
			\fill[#3] (#1+1,#2+1) circle (1.3pt);
		}
		
		\foreach \x in {-5,-3,-1,1,3,5}{
			\foreach \y in {-5,-3,-1,1,3,5}{
				\onesq{\x}{\y}{red}
			}
		}
		\foreach \x in {-4,-2,0,2,4}{
			\foreach \y in {-4,-2,0,2,4}{
				\onesq{\x}{\y}{blue}
			}
		}
	\end{tikzpicture}
	
	\caption{Left: covering $\mathbb Z^2$ with star-devices, Right: covering $\mathbb Z^2$ with square-devices.}
	\label{divice_cover_Z^2}
\end{figure}

This devices construction can be generalized to a large class of graphs.
In the next section, we build on this to extend the new comparison inequalities to the infinite-volume setting and prove Theorem \ref{thm:new_bound_pc(q)}.

\section{Extending comparison inequalities to infinite graphs}\label{Extending comparison inequalities to infinite graphs}
Let $G=(V,E)$ be an infinite, locally finite, connected graph.
As in the statement of Theorem \ref{thm:new_bound_pc(q)}, we suppose that $G$ contains edges which are on a doubly-infinite path. These edges are called \emph{pivotal}.
Let $\Delta:= \max_{v \in V} \deg(v)$ be the maximal degree of $G$ and $L:= \min_{\gamma \text{ cycle with pivotal edge}}|\gamma|$ be the minimal length of a cycle containing a pivotal edge.
In the following, for any finite subset of vertices $\Lambda \Subset V$, we will identify $\Lambda$ with the induced subgraph of $G$, and consider the boundary of $\Lambda$ to be
$$\partial \Lambda=\{x \in \Lambda \mid \exists y \in \Lambda^c, \{x,y\} \in E\}.$$
Let us start by properly define the devices.
\begin{definition}
	A \emph{device} in $G$ is a finite subset of $E$ that contains a pivotal edge. We call \emph{star-devices} the devices which are formed by all the edges attached to some vertex of $G$, and \emph{cycle-devices}\footnote{This terminology replaces the less general notion of square-devices of the previous Section.} the devices formed by a cycle of length $L$ in $G$.
\end{definition}
The restriction on the length of the cycle-devices is added to avoid considering cycles of arbitrary large length around a pivotal edge; in fact, this would cancel the enhancement (see Proposition \ref{prop:comparison_inequalities_infinite_volume} below).
Thus, $\Delta$ is the maximal size of a star-device, and $L$ is the the size of cycle-devices.
We suppose that both $\Delta$ and $L$ are finite (this is in particular the case if $G$ satisfies the hypothesis of Theorem \ref{thm:new_bound_pc(q)}, i.e. if $G$ is quasi-transitive and contains pivotal edges which are located on a cycle).
One can therefore prove the following straightforward consequence of Propositions \ref{prop:below_enhancement} and \ref{prop:above_enhancement} for infinite graphs:

\begin{proposition}\label{prop:comparison_inequalities_infinite_volume}
	Suppose that there exists $\widehat{E} \subset E$ such that for all $e \in \widehat{E}$, there is a star-device containing $e$ of which all the other edges are in $E\setminus \widehat{E}$.
	Similarly, suppose that there exists $\widetilde{E} \subset E$ such that for all $e \in \widetilde{E}$, there is a cycle-device containing $e$ of which all the other edges are in $E\setminus \widetilde{E}$.
	Let $\widehat{\varepsilon}:=|p'-p| (1-\max(p,p'))^{\Delta-1}$ and $\widetilde{\varepsilon}:=|p'-p| \min(p,p')^{L-1}$. Then, for any FK measure $\phi$ on $G$,
	\begin{enumerate}
		\item If $p<p'$, one has $\PP^G_{p+\widehat{\varepsilon} \ind{e \in \widehat{E}}} \preceq \phi \preceq \PP^G_{p'-\widetilde{\varepsilon} \ind{e \in \widetilde{E}}}$.
		\item If $p>p'$, one has $\PP^G_{p'+\widetilde{\varepsilon} \ind{e \in \widetilde{E}}} \preceq \phi \preceq \PP^G_{p-\widehat{\varepsilon} \ind{e \in \widehat{E}}}$.
	\end{enumerate}
\end{proposition}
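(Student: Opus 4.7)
The plan is to reduce Proposition \ref{prop:comparison_inequalities_infinite_volume} to the finite-volume results of Propositions \ref{prop:below_enhancement} and \ref{prop:above_enhancement}, establishing the dominations first for weak limit measures and then extending to Gibbs measures via the extremal decomposition together with Proposition \ref{prop:inclusion_FK_measures}(1).

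For $\phi \in \W_{p,q}$, write $\phi = \lim_n \phi^{\alpha_n}_{\Lambda_n,p,q}$ for some $\Lambda_n \uparrow G$ and partitions $\alpha_n \in \Pi(\partial\Lambda_n)$, and let $\widehat{E}_n \subset \widehat{E}$ (resp.\ $\widetilde{E}_n \subset \widetilde{E}$) consist of those $e$ for which a star-device (resp.\ cycle-device) satisfying the hypothesis of the proposition can be chosen entirely inside $\Lambda_n$. Local finiteness of $G$ together with the bounded device sizes $\Delta$ and $L$ gives $\widehat{E}_n \uparrow \widehat{E}$ and $\widetilde{E}_n \uparrow \widetilde{E}$. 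Moreover, when the star-device of some $e \in \widehat{E}_n$ sits in $\Lambda_n$, its central vertex has all its $G$-neighbors in $\Lambda_n$ and therefore does not lie in $\partial\Lambda_n$; the other star edges then form a valid cutset for $e$ in $\Lambda_n \cup \alpha_n$, irrespective of the boundary condition $\alpha_n$. The analogous statement for cycle-devices and paths is immediate. Propositions \ref{prop:below_enhancement} and \ref{prop:above_enhancement} therefore apply on each $\Lambda_n$ with $\widehat{E}_n$ and $\widetilde{E}_n$, and their enhancement constants are bounded below uniformly in $n$ by $\widehat{\varepsilon}$ and $\widetilde{\varepsilon}$, obtained from the closure of the $\Delta - 1$ remaining star edges or the openness of the $L - 1$ remaining cycle edges. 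Assuming $p < p'$ for concreteness,
\[\PP^{\Lambda_n}_{p + \widehat{\varepsilon}\,\ind{e \in \widehat{E}_n}} \preceq \phi^{\alpha_n}_{\Lambda_n,p,q} \preceq \PP^{\Lambda_n}_{p' - \widetilde{\varepsilon}\,\ind{e \in \widetilde{E}_n}},\]
and passing to weak limits in $\{0,1\}^E$ yields the desired dominations for $\phi$: the middle term converges to $\phi$, while $\widehat{E}_n \uparrow \widehat{E}$ and $\widetilde{E}_n \uparrow \widetilde{E}$ give pointwise, hence weak, convergence of the outer product measures to $\PP^G_{p + \widehat{\varepsilon}\,\ind{e \in \widehat{E}}}$ and $\PP^G_{p' - \widetilde{\varepsilon}\,\ind{e \in \widetilde{E}}}$; stochastic domination is preserved under weak limits. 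The case $p > p'$ is symmetric.

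For a general $\phi \in \G^\Phi_{p,q}$ with arbitrary convention $\Phi$, the set $\G^\Phi_{p,q}$ is a simplex (\cite{Georgii}, Theorem 7.26), so $\phi$ is an integral of extremal Gibbs measures; each such extremal element lies in $\W_{p,q}$ by Proposition \ref{prop:inclusion_FK_measures}(1) and therefore satisfies the bounds just established. For any increasing event $A$, integrating the inequalities $\PP^G_{p + \widehat{\varepsilon}\,\ind{e \in \widehat{E}}}(A) \leq \phi'(A) \leq \PP^G_{p' - \widetilde{\varepsilon}\,\ind{e \in \widetilde{E}}}(A)$ against the representing measure preserves them on both sides, giving the full proposition. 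The only delicate point, and the reason for restricting to bounded-size star- and cycle-devices from the outset, is the uniform positive lower bound $\widehat{\varepsilon}, \widetilde{\varepsilon} > 0$ on the enhancements: allowing devices of unbounded length would make the enhancements vanish as $n \to \infty$ and the infinite-volume inequality would degenerate to the classical one.
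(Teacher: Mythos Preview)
Your proof is correct and follows essentially the same route as the paper: apply Propositions \ref{prop:below_enhancement} and \ref{prop:above_enhancement} on each $\Lambda_n$ with the restricted sets $\widehat{E}_n,\widetilde{E}_n$, bound the enhancements below by $\widehat{\varepsilon},\widetilde{\varepsilon}$ using the device sizes $\Delta$ and $L$, pass to the weak limit, and then extend to Gibbs measures via the extremal decomposition and Proposition \ref{prop:inclusion_FK_measures}(1) --- which is precisely the second of the two methods the paper names. Your remark that the center of a star-device contained in $\Lambda_n$ automatically lies outside $\partial\Lambda_n$ is a clean way to see that the boundary wiring cannot spoil the cutset; the paper instead imposes the extra condition ``endpoints not both in $\partial\Lambda_n$'' by hand, which your observation shows is redundant.
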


\begin{proof}
	We do the proof only in the case $p<p'$, the proof being the same in the other case.
	Let $\phi$ be an FK measure on $G$, and first assume that it is a limit measure.
	This means that there exists an increasing sequence $\Lambda_n \uparrow G$ and a sequence $\alpha_n \in \Pi(\partial \Lambda_n)$ such that
	$$\phi= \lim_{n \to \infty} \phi^{\alpha_n}_{\Lambda_n, p,q}.$$
	For all $n\geq 0$, we take $\widehat{E}(\Lambda_n)$ to be the set of edges of $\widehat{E}$ whose associated star-device (i.e. the one included in $\{e\} \cup E\setminus\widehat{E}$, which exists by hypothesis) is entirely included in $\Lambda_n$, and whose endpoints are not both in $\partial \Lambda_n$. Define $\widetilde{E}(\Lambda_n)$ similarly (we can even relax the assumption regarding $\partial \Lambda$).
	For all $e \in \widehat{E}(\Lambda_n)$, if all the edges except $e$ of the star-device associated with $e$ are closed, it yields a closed cutset for $e$ included in $E(\Lambda_n)\setminus\widehat{E}(\Lambda_n)$; therefore, $\widehat{\varepsilon}_e$ defined in Proposition \ref{prop:below_enhancement} satisfies
	$$\widehat{\varepsilon}_e \geq |p'-p| (1-\max(p,p'))^{\Delta-1}=\widehat{\varepsilon},$$
	so Proposition \ref{prop:below_enhancement} implies that
	$$\PP^{\Lambda_n}_{p+\widehat{\varepsilon} \ind{e \in \widehat{E}(\Lambda_n)}} \preceq \phi^{\alpha_n}_{\Lambda_n, p,q}.$$
	Similarly, for all $e\in \widetilde{E}(\Lambda_n)$, if all the edges except $e$ of the cycle-device associated with $e$ are open, it yields an open path connecting both endpoints of $e$ (without using edge $e$) included in $E(\Lambda_n)\setminus\widetilde{E}(\Lambda_n)$; therefore, we have $$\widetilde{\varepsilon}_e \geq |p'-p| \min(p,p')^{L-1}=\widetilde{\varepsilon}$$ and Proposition \ref{prop:above_enhancement} gives
	$$\phi^{\alpha_n}_{\Lambda_n, p,q} \preceq \PP^{\Lambda_n}_{p'-\widetilde{\varepsilon} \ind{e \in \widetilde{E}(\Lambda_n)}}.$$
	Letting $n$ goes to infinity, we obtain $\PP^G_{p+\widehat{\varepsilon} \ind{e \in \widehat{E}}} \preceq \phi \preceq \PP^G_{p'-\widetilde{\varepsilon} \ind{e \in \widetilde{E}}}$.
	
	The proof for FK Gibbs measures (for any convention) can be done directly from the DLR equations, considering large box containing the support of a local test event and integrating over the boundary condition, or simply by noticing that FK Gibbs measures are mixtures of FK limit measures: this is a direct consequence of Prop \ref{prop:inclusion_FK_measures}, item (1).
\end{proof}

Note that in the above proof, we never use the fact that the devices contain pivotal edges. Nevertheless, this will be crucial for the following proof of Theorem \ref{thm:new_bound_pc(q)}.
We claim that a construction of subsets $\widehat{E}$ and $\widetilde{E}$ satisfying the hypotheses of Proposition \ref{prop:comparison_inequalities_infinite_volume} can be done for all quasi-transitive graphs that contain pivotal edges which belong to a cycle. The quasi-transitivity assumption should not be necessary, but it covers most of the interesting graphs on which the argument would work.
Note also that for a given explicit graph, it should not be hard to find appropriate subsets $\widehat{E}$ and $\widetilde{E}$. In the following proof, we present a generic construction, that might not give the optimal enhancement. We then apply the essential enhancement theory of Aizenman and Grimmett \cite{AizenmanGrimmett} to deduce Theorem \ref{thm:new_bound_pc(q)}.
\begin{proof}[Proof of Theorem \ref{thm:new_bound_pc(q)}]
	Suppose that $G$ is quasi-transitive and that it contains a pivotal edge which is located on a cycle; by quasi-transitivity, there are in fact infinitely many such edges. In particular, we have that $L$ is finite, and $G$ has bounded degree (because it is locally finite), so $\Delta$ is finite too.
	The construction of $\widehat{E}$ and $\widetilde{E}$ is done with the same procedure. Thus, in the following, a device refers to a star- or a cycle-device, depending on whether one aims to construct $\widehat{E}$ or $\widetilde{E}$.
	For all $e \in E$ and for all $k \geq 1$, let $B_k(e)$ be the set of edges at distance less than $k$ from $e$.
	Let $K$ be an integer large enough so that for all $e\in E$, $B_K(e)$ contains a device. Such a finite $K$ exists because the graph is quasi-transitive.
	Fix $o \in E$, and consider a device included in $B_K(o)$, and choose arbitrarily one of its pivotal edges to be included in the set of enhanced edges (either $\widehat{E}$ or $\widetilde{E}$). Then, until it is no longer possible, consider at each step a new device included in $B_K(o)$ disjoint from all the previously discovered devices; choose again one of its edges to be an enhanced edge.
	Afterwards, consider a new device included in $B_{2K}(o)$ which is not authorized to intersect the previously discovered devices but it can intersect $B_K(o)$. Do this until it is no longer possible, and then repeat the same procedure for $B_{3K}(o)$ etc.
	
	This provides a construction of $\widehat{E}$ and $\widetilde{E}$ satisfying the assumption of Proposition \ref{prop:comparison_inequalities_infinite_volume}. Let $\phi$ be an FK measure and consider the stochastic domination for $\phi$ given by Proposition \ref{prop:comparison_inequalities_infinite_volume}:
	$$\phi \preceq \PP^G_{\max(p,p')-\varepsilon \ind{e \in E'}},$$ taking $\varepsilon = \widehat{\varepsilon}$ and $E'=\widetilde{E}$ if $p>p'$, and $\varepsilon = \widetilde{\varepsilon}$ and $E'=\widehat{E}$ if $p<p'$. Each edge $e \in E$ is at distance bounded by $K$ plus the maximal diameter of a device from the set $E'$ of enhanced edges. Indeed, there is at least one edge in $B_K(e)$ belonging to a device discovered in the construction of $E'$; otherwise, we could have discover a new device included in $B_K(e)$, disjoint from the other devices of the exploration, and add one of its edges to $E'$.
	Since the edges of $E'$ have been chosen so that they are all pivotal,
	the enhancement is necessarily ``essential'' in the sense of \cite{AizenmanGrimmett} --- meaning that for any enhanced edge, there exists a configuration such that there is no doubly-infinite path if the edge is closed, but such a path exists if the edge is open.
	According to \cite{AizenmanGrimmett}, this (together with the fact that each edge of the graph is at bounded distance from an enhanced edge)
	is sufficient to guarantee that the enhancement strictly increases the critical parameter (here we need to have $0<p_c(G)<1$). More precisely,
	for all $q \neq 1$, there exists $\delta'(q)>0$ such that if $p$ satisfies $\max(p,p') < p_c(G)+ \delta'(q)$, then the percolation associated with the product measure $\PP^G_{\max(p,p')-\varepsilon \ind{e \in E'}}$ is subcritical.
	This yields item (1) of the theorem, and (2) follows from the same argument with the stochastic domination from below.
\end{proof}

\section{Proofs of strong uniqueness}
\subsection{Uniqueness in the subcritical phase}\label{Uniqueness in the subcritical phase}
In this section, we show strong uniqueness in the regime where the random cluster model is dominated by a subcritical percolation. It differs from the classical proofs of uniqueness as it relies on the structure of the set of Gibbs measures (for a fixed convention). In particular, we do not derive strong uniqueness from uniqueness of the limit measure, as classically done in the regime $q>1$.
Instead, we use the fact that extremal Gibbs measures can be obtained as weak limits along any sequence of boxes, taking as boundary condition a typical realization of the measure. This was mentioned earlier, in item (1) of Proposition \ref{prop:inclusion_FK_measures}, which is stated in \cite{friedli_velenik_2017} (Theorem 6.63). We reproduce the argument in the proof of Theorem \ref{thm:domination_gives_uniqueness} stated below, and make an appropriate choice of boxes to prove uniqueness. Theorem \ref{thm:domination_gives_uniqueness} appears to be a general statement regarding uniqueness, when all Gibbs measures for a given specification are dominated by a subcritical percolation.
Theorem \ref{thm:subcritical_strong_uniqueness} is then a direct consequence of this result plus item (2) of Proposition \ref{prop:inclusion_FK_measures}.

\begin{theorem}[Uniqueness through domination by a subcritical percolation]\label{thm:domination_gives_uniqueness}
	Fix a convention $\Phi$ and parameters $p$ and $q$.
	Assume that there exists a probability measure $\PP$ on $\{0,1\}^E$ such that $\bbP(\exists \text{ an infinite cluster)}=0$ which dominates every $\Phi-$Gibbs measure, that is for all $\phi \in \G^\Phi_{p,q}$, $\phi \preceq \bbP$. Then we have uniqueness of the Gibbs measure, that is, $|\G^\Phi_{p,q}| = 1$. 
\end{theorem}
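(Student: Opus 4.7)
Since $\G^\Phi_{p,q}$ is a Choquet simplex \cite[Thm.~7.26]{Georgii}, uniqueness reduces to showing that any two extremal measures $\phi_1,\phi_2\in\operatorname{ex}\G^\Phi_{p,q}$ agree on every local event $A$ supported on a finite $\Lambda_0\Subset V$. The plan is to combine Proposition~\ref{prop:inclusion_FK_measures}(1) with the domination by $\bbP$, picking $\omega$-dependent exhaustions whose outer boundary is already a closed cutset in $\omega$, so that the boundary partition seen by the convention collapses to the free one and $\phi(A)$ can be expressed as a ``free-FK average'' independent of $\phi$.

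Fix $\phi\in\operatorname{ex}\G^\Phi_{p,q}$. Because $\{\exists\,\text{infinite cluster}\}$ is increasing and $\bbP$-negligible, $\phi\preceq\bbP$ gives $\phi(\exists\,\text{infinite cluster})=0$: for $\phi$-a.e.\ $\omega$ every cluster of $\omega$ is finite, so the $\omega$-saturation
\[
\Lambda_n(\omega)\;:=\;\bigcup_{v\in B_n(\Lambda_0)}C_\omega(v)
\]
is a finite union of finite clusters, nondecreasing with $\bigcup_n\Lambda_n(\omega)=V$. Since $\Lambda_n(\omega)$ is closed under $\omega$-clusters, every edge with exactly one endpoint in $\Lambda_n(\omega)$ is $\omega$-closed, and, combined with the absence of infinite clusters, this forces $\Phi_{\Lambda_n(\omega)}(\omega)$ to be the free partition on $\partial\Lambda_n(\omega)$ for every convention.

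Starting from Proposition~\ref{prop:inclusion_FK_measures}(1) along a deterministic exhaustion $\Delta_k\uparrow G$, which yields $\phi^{\Phi_{\Delta_k}(\omega)}_{\Delta_k,p,q}(A)\to\phi(A)$ for $\phi$-a.e.\ $\omega$, and using the compatibility of the specification across $\Lambda_n(\omega)\subset\Delta_k$ together with the free-boundary Markov property of finite FK measures applied to the $\omega$-closed cutset at $\partial\Lambda_n(\omega)$, one should be able to ``project down'' the limit and express
\[
\phi(A)\;=\;\lim_{n\to\infty}\mathbb{E}_\phi\!\left[\phi^{\mathrm{free}}_{\Lambda_n(\omega),p,q}(A)\right].
\]
The remaining step is to show that this right-hand side does not depend on $\phi$: I would couple $\phi_1,\phi_2$ with $\bbP$ simultaneously via $\phi_i\preceq\bbP$, producing $(\omega_1,\omega_2,\bar\omega)$ with $\omega_i\sim\phi_i$, $\bar\omega\sim\bbP$, and $\omega_i\leq\bar\omega$. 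Every edge leaving $\Lambda_n(\bar\omega)$ is $\bar\omega$-closed, hence $\omega_i$-closed, so the same closed-cutset mechanism applies to each $\omega_i$ with the common box $\Lambda_n(\bar\omega)$ in place of $\Lambda_n(\omega_i)$, yielding
\[
\phi_i(A)\;=\;\lim_{n\to\infty}\mathbb{E}_{\bbP}\bigl[\phi^{\mathrm{free}}_{\Lambda_n(\bar\omega),p,q}(A)\bigr]\qquad(i=1,2);
\]
the right-hand side depends only on $\bbP$ and the free specification, so $\phi_1(A)=\phi_2(A)$ and $|\G^\Phi_{p,q}|=1$.

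The main obstacle is the ``projection'' step above: Proposition~\ref{prop:inclusion_FK_measures}(1) is stated along deterministic exhaustions, while one needs the limit along the random exhaustions $\Lambda_n(\omega)$ (and then $\Lambda_n(\bar\omega)$). Justifying that the peeling of the annulus $\Delta_k\setminus\Lambda_n(\omega)$ via compatibility does not introduce extra boundary interactions in the limit is the most delicate technical point, and is where the assumption $\bbP(\exists\,\text{infinite cluster})=0$ enters crucially, through the $\phi$-a.s.\ finiteness of the cluster of $\Lambda_0$.
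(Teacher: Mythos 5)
Your overall architecture is the same as the paper's: reduce to extremal measures via the simplex structure, couple the extremal measures with the dominating subcritical $\bbP$ so that a common random box with closed edge-boundary eventually surrounds the support of the local event, and use the resulting free boundary condition to make the conditional laws coincide. The gap is exactly where you flag it, and it is genuine: the ``projection'' step does not follow from compatibility of the specification. Compatibility says that under $\phi^{\Phi_{\Delta_k}(\omega)}_{\Delta_k,p,q}$, \emph{conditionally on the resampled configuration $\eta$ in the annulus} $\Delta_k\setminus\Lambda_n(\omega)$, the law inside $\Lambda_n(\omega)$ is the FK measure with boundary condition $\pi(\Phi_{\Delta_k}(\omega),\eta)$. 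But $\eta$ is drawn from the finite-volume measure, not fixed equal to the external $\omega$, and there is no reason for the resampled annulus to exhibit the closed cutset at $\partial\Lambda_n(\omega)$. Hence $\phi^{\Phi_{\Delta_k}(\omega)}_{\Delta_k,p,q}(A)$ is an average over boundary partitions on $\partial\Lambda_n(\omega)$, not the free measure there, and the identity $\phi(A)=\lim_n\mathbb{E}_\phi\bigl[\phi^{0}_{\Lambda_n(\omega),p,q}(A)\bigr]$ cannot be extracted this way. A second, smaller issue: your $\Lambda_n(\omega)$ is obtained by saturating a ball with $\omega$-clusters, so it is not measurable with respect to the configuration outside itself; the paper sidesteps this by letting the random boxes be functions of the third, dominating coordinate only.

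The paper's fix replaces the detour through Proposition~\ref{prop:inclusion_FK_measures}(1) by a direct use of the DLR equation in the random boxes: for the coupled triple $(\omega_1,\omega_2,\omega_3)$ and connected boxes $S_n(\omega_3)\uparrow G$ whose boundary edges are closed in $\omega_3$ (hence in $\omega_1$ and $\omega_2$), the conditional expectation $\mu_i(C\mid\mathcal{F}_{S_n(\omega_3)^c})(\omega_i)$ equals $\phi^0_{S_n(\omega_3),p,q}(C)$ for both $i$, with no annulus to peel. The limit is then identified not by finite-volume approximation but by the backward martingale convergence theorem together with tail triviality of extremal Gibbs measures (\cite{friedli_velenik_2017}, Theorem 6.58): $\mu_i(C\mid\mathcal{F}_{S_n(\omega_3)^c})(\omega_i)\to\mu_i(C\mid\mathcal{F}_\infty)(\omega_i)=\mu_i(C)$ almost surely. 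If you make this substitution, the rest of your plan --- the simultaneous coupling, the use of a common box determined by the dominating coordinate, and the resulting $\phi$-independence of the limit --- matches the paper's proof.
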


\begin{proof}
	Let $\mu_1, \mu_2 \in \operatorname{ex} \G^\Phi_{p,q}$ be two extremal $\Phi$-Gibbs measures. We conclude by showing that $\mu_1 = \mu_2$, this would mean that $|\operatorname{ex}\G^\Phi_{p,q}| =1$ and consequently $|\G^\Phi_{p,q}| = 1$. Now let $\PP$ be as in the statement of the theorem. By stochastic domination, it is possible to find a coupling $(\omega_1, \omega _2, \omega _3)$ such that $\omega_1 \sim \mu_1, \omega_2 \sim \mu_2$ and $\omega_3 \sim \PP$, satisfying almost surely $\omega_1 \leq \omega_3$ and $\omega_2 \leq \omega_3$.
	Let $\tilde{\Omega}$ denote the set of configurations such that we can find an increasing sequence $S_n = S_n(\omega_3) \subset \bbZ^2$ of connected subgraphs of $G$ such that $S_n \uparrow G$ and $\omega_3|_{\partial S_n} = 0$. By subcriticality of $\PP$, it follows that $\PP(\tilde{\Omega}) = 1$ (and consequently, considering our coupling, $\mu_i(\tilde{\Omega}) = 1$ as well). Let $C$ be a cylindrical event. By uniqueness of conditional expectation and using the fact that $\mu_i(\cdot \mid \mathcal{F}_{S_n(\omega_3)^c})(\omega_i)$ is the free boundary random cluster measure in the interior of $S_n(\omega_3)$, we have for almost every tuple $(\omega_1, \omega_2, \omega_3)$, for large enough $n$, that: 
	$$\mu_1(C | \mathcal{F}_{S_n(\omega_3)^c})(\omega_1) =\mu_2(C | \mathcal{F}_{S_n(\omega_3)^c})(\omega_2), $$
	because $C$ is measurable with respect to the interior of $S_n(\omega_3)$ for large enough $n$. Now we use the backward martingale convergence theorem along with the fact $S_n({\omega_3})^{\circ} \uparrow \bbZ^2$ almost surely, to conclude that almost surely, $$\mu_i(C | \mathcal{F}_{S_n(\omega_3)^c})(\omega_i) \xrightarrow{n \to \infty}  \mu_i(C|\mathcal{F}_{\infty})(\omega_i) = \mu_i(C)\quad  i = 1,2. $$In the last equality we have used the fact that extremal measures are tail trivial (see \cite{friedli_velenik_2017} Theorem 6.58). This means that $\mu_1, \mu_2$ agree on cylindrical events and are thus equal.  
\end{proof}

\begin{proof}[Proof of Theorem \ref{thm:subcritical_strong_uniqueness}]
	From Theorem \ref{thm:new_bound_pc(q)}, we now that every $\phi \in \operatorname{FK}(p,q)$ is dominated by some subcritical percolation $\PP$ when $\max(p,p') < p_c(G)+\delta'.$
	This implies that $\phi-$a.s. there is no infinite cluster, so by Proposition \ref{prop:inclusion_FK_measures} item (2), it follows that $\phi \in \G_{p,q}^{\Phi^f}$, so $\operatorname{FK}(p,q) \subset \G_{p,q}^{\Phi^f}$.
	Then, in particular the domination by $\PP$ holds for every $\phi \in \G_{p,q}^{\Phi^f}$, so according to Theorem \ref{thm:domination_gives_uniqueness}, we have $|\G_{p,q}^{\Phi^f}|=1$.
	We conclude that $|\operatorname{FK}(p,q)|=1$ for this range of parameters.
\end{proof}

\subsection{Uniqueness in the supercritical phase through planar duality}\label{Uniqueness in the supercritical phase through planar duality}
We start by recalling some definitions about plane graphs and their duality.
A graph is said to be \emph{plane} if it is\footnote{Note the subtle difference with a planar graph, which \emph{can} be embedded in $\bbR^2$. We choose to consider plane graphs in order to have uniqueness of the dual graph.} embedded in the plane $\bbR^2$ such that two edges never intersect, except at a common endpoint. Let $G=(V,E)$ be a plane graph. The \emph{faces} of $G$ are defined as the connected component of the complementary of the graph when embedded in $\bbR^2$.
The \emph{dual graph} of $G$, denoted $G^*=(V^*,E^*)$, is defined as follows:
place a dual vertex at each face of $G$ (including possibly infinite faces), and for each edge $e\in E$, put a dual edge between two dual vertices whose corresponding faces of $G$ are separated by the edge $e$.
Obviously, there is a canonical bijection between $E$ and $E^*$.
Therefore, for each configuration $\omega \in \{0,1\}^E$, one can associate a dual configuration $\omega^* \in \{0,1\}^{E^*}$, by declaring as open a dual edge if and only if its corresponding (primal) edge is closed.
It is straightforward from this definition that if $\omega$ is distributed according to $\bbP^G_p$, then its dual configuration $\omega^*$ has law $\bbP^{G^*}_{1-p}$. This of course extends to inhomogeneous Bernoulli percolation measures.
The situation is more intricate for FK measures, but the FK model still have a nice feature with respect to the planar duality.

Let $G=(V,E)$ be a finite plane graph, and take $\partial G= \emptyset$, so that we do not have a boundary condition (thus we also drop it from the notation). It classically follows (see \cite{grimmett2006random} Section 6.1) from the Euler's formula for plane graphs that 
$$\forall \omega \in \{0,1\}^E, \ \phi_{G, p,q}(\omega)= \phi_{G^*,p^*,q}(\omega^*),$$
where $p^*$ is defined by the following relation:
$$\dfrac{p}{1-p}\dfrac{p^*}{1-p^*}=q.$$
Note that $p^*$ of course depends on $p$ and also on $q$, but we drop it from the notation as we did for $p'$.
In fact, there is a strong link between $p'$ and $p^*$, as we can observe that
\begin{equation*}
	p'=\dfrac{p}{p+(1-p)q}=\dfrac{1}{1+(1-p)q/p}=\dfrac{1}{1+p^*/(1-p^*)}=1-p^*.
\end{equation*}
Similarly, we have $p'^*=1-p$.

Considering the duality relation for FK measures with a boundary condition is more challenging.
Suppose now that $G$ is an infinite plane graph, and take a finite subgraph $\Lambda \Subset G$. As before, $\partial \Lambda$ is composed of the vertices of $\Lambda$ having a neighbor outside. Intuitively, one may believe that the FK measure on $\Lambda$ with some boundary condition admits a dual measure on $\Lambda^*$ with a dual boundary condition.
If $\alpha \in \Pi(\partial \Lambda)$ is a partition, there is no obvious way to define its dual.
Even if the boundary condition comes from an external configuration, ---in this case we can define the dual external configuration--- the convention at infinity that we choose affect the measure
and it is not possible in general to define the dual of a convention.
The favorable case is when the external configuration $\xi$ contains at most one infinite cluster: thus, the convention has no effect and we have for any pair of conventions $\Phi,\Phi'$, $$\phi^{\Phi_\Lambda(\xi)}_{\Lambda,p,q}=\phi^{\Phi'_\Lambda(\xi)}_{\Lambda,p,q},$$ that we will write for short $\phi^\xi_{\Lambda,p,q}$.
In this case, we have the following duality relation:
\begin{equation}\label{eq:duality_relation_FK}
	\forall \omega \in E(\Lambda), \ \phi^\xi_{\Lambda, p,q}(\omega)=\phi^{\xi^*}_{\Lambda^*, p^*, q}(\omega^*).
\end{equation}
Indeed, assuming that $\xi$ has at most one infinite cluster, it follows that the partition induced by $\xi$ at the boundary of $\Lambda$ can be determined by revealing $\xi$ on a finite box $\Delta \supset \Lambda$, as otherwise it would means that two vertices of $\partial \Lambda$ are connected by a ``path'' which is not contained in any finite box, i.e. they belong to two disjoint infinite clusters which are merged at infinity by the convention. 
It follows that we have the duality relations for the FK measures in $\Delta$ (and $\Delta^*$) without boundary condition. The relation is then preserved by conditioning the measures on $\Delta \setminus\Lambda$ (resp. $\Delta^* \setminus \Lambda^*$) to coincide with $\xi$ (resp. $\xi^*$), and this gives the same measure on $\{0,1\}^{E(\Lambda)}$ (resp. $\{0,1\}^{E(\Lambda^*)}$) as $\phi^\xi_{\Lambda, p,q}$ (resp. $\phi^{\xi^*}_{\Lambda^*, p^*,q}$).

We can now give the duality argument that allows to get strong uniqueness in the supercritical phase.

\begin{proof}[Proof of Theorem \ref{thm:supercritical_strong_uniqueness}]
	Let $\delta''>0$ and take parameters $p \in [0,1]$ and $q\neq1$ so that $\min(p,p')>1-p_c(G^*)-\delta''$. Observe that this condition is equivalent to $\max(p^*, p'^*) < p_c(G^*)+\delta''$. Let $\phi \in \operatorname{FK}(p,q)$ be an FK measure on $G$; we first prove that $\phi$ is supported on configurations with exactly one infinite cluster, providing $\delta''$ is small enough.
	We know from Proposition \ref{prop:comparison_inequalities_infinite_volume} and from our devices construction (Proof of Theorem \ref{thm:new_bound_pc(q)}) that there exists a subset $E' \subset E$ and $\varepsilon>0$ such that 
	$$\PP^G_{\min(p,p')+\varepsilon \ind{e \in E'}} \preceq \phi,$$
	and that this product measure is supercritical, so is $\phi$, if $\min(p,p')$ is above or slightly below $p_c(G)$. This is the case if $\delta''$ is small enough, as $1-p_c(G^*) \geq p_c(G)$, so in this case there is $\phi-$a.s. at least one infinite cluster.
	
	Call $\phi^*$ the dual law of $\phi$, that is, the distribution of $\omega^*$ when $\omega$ has law $\phi$.
	The above stochastic domination admits a dual version, namely
	$$\phi^* \preceq \PP^{G^*}_{1-\min(p,p')-\varepsilon \ind{e \in E'}} = \PP^{G^*}_{\max(p^*, p'^*)-\varepsilon \ind{e \in E'}}.$$
	Again by essential enhancements, applied on the dual graph $G^*$, this product measure is subcritical if $\max(p^*,p'^*)$ is below or slightly above $p_c(G^*)$, i.e. if $\delta''$ is small enough.
	Therefore, $\omega^*$ has a.s. no infinite cluster, so $\omega$ has a.s. at most one infinite cluster.
	(Note that the subcriticality of this measure does \emph{not} follow from supercriticality of its dual $\PP^G_{\min(p,p')+\varepsilon \ind{e \in E'}}$ on $G$, that we have already obtained in the proof of Theorem \ref{thm:new_bound_pc(q)} by essential enhancements; indeed, we have no guarantee that this measure on $G$ has a.s. exactly one infinite cluster!)
	
	Since we now know that there is $\phi-$a.s. exactly one infinite cluster, for $\delta''>0$ small enough, we can now argue as in the proof of Theorem \ref{thm:subcritical_strong_uniqueness}, and only prove uniqueness of the Gibbs measure for a single fixed convention; according to Proposition \ref{prop:inclusion_FK_measures} item (2), it yields strong uniqueness.
	In fact, for proving strong uniqueness, we only need the fact that there is at most one infinite cluster; in this case, the convention has no effect so for all $\Lambda \Subset G$ and for any pair of conventions $\Phi,\Phi'$, we have $\phi^{\Phi_\Lambda(\xi)}_{\Lambda,p,q}=\phi^{\Phi'_\Lambda(\xi)}_{\Lambda,p,q}=\phi^\xi_{\Lambda,p,q}$.
	
	Let $\phi \in \operatorname{ex}\G^{\Phi^f}_{p,q}$. By extremality of $\phi$, we have
	$$\phi=\lim_{\Lambda \uparrow G} \phi^\xi_{\Lambda,p,q} \text{ for $\phi-$a.e. } \xi$$ (Proposition \ref{prop:inclusion_FK_measures} item (1)).
	Now, by Theorem \ref{thm:subcritical_strong_uniqueness} applied to $G^*$, we have strong uniqueness for the FK model with parameters $p^*$ and $q$ on $G^*$, if $\delta''$ is small enough. Denote by $\phi^*$ the unique FK measure on $G^*$ with these parameters.
	Since it is unique it is necessarily extremal, so we have 
	$$\lim_{\Lambda^* \uparrow G^*} \phi^{\xi^*}_{\Lambda^*, p^*, q} = \phi^* \text{ for $\phi^*-$a.e. } \xi^*.$$
	By applying the duality relation \eqref{eq:duality_relation_FK} to all finite subgraphs $\Lambda \Subset G$, it follows that $\phi$ is the (unique) dual measure of $\phi^*$, that is the distribution of $\omega$ when $\omega^*$ has law $\phi^*$.
	Therefore, one has $|\operatorname{ex} \G^{\Phi^f}_{p,q}|=1$, so $|\G^{\Phi^f}_{p,q}|=1$ and we get strong uniqueness.
\end{proof}

\section{Open problems}
We conclude the paper with a few open questions. 
In Theorem \ref{thm:new_bound_pc(q)}, we have extended the known regimes of subcriticality and supercriticality of the FK percolation under mild assumptions on the graph. However, the reader will have noticed a major difference between these two regimes with regard to the question of the uniqueness of the Gibbs measure.
In the subcritical case, we do not need further assumption to derive uniqueness (Theorem \ref{thm:subcritical_strong_uniqueness}) while in the supercritical case, we get the result only in dimension 2 through planar duality (Theorem \ref{thm:supercritical_strong_uniqueness}).
The point is that we do not have a supercritical equivalent of Theorem \ref{thm:domination_gives_uniqueness}, which affirms that domination by a subcritical percolation implies uniqueness.
Therefore, we raise the following question:
\begin{question}
	Suppose that there exists $\PP$ a probability measure on $\{0,1\}^E$ such that for all $\phi \in \operatorname{FK}(p,q)$, $\PP \preceq \phi$ and $\PP(\exists \text{ an infinite cluster})=1$. Is is true that we have $|\operatorname{FK}(p,q)|=1$?
\end{question}

This question is not specific to the random cluster model (just like Theorem \ref{thm:domination_gives_uniqueness}).
In case of a positive answer, we can bypass the duality argument and thus remove the planar hypothesis on the graph in Theorem \ref{thm:supercritical_strong_uniqueness}.

\bigskip
For a given FK measure $\phi$, the covariance function is defined by
$$\forall e,f \in E, \ \ \langle \omega_e;\omega_f\rangle_\phi:= \phi[ \omega_e \omega_f]-\phi[\omega_e]\phi[\omega_f]$$
where $\phi[ \cdot]$ denotes the expectation with respect to $\phi$.
The covariance function is believed to decay exponentially fast with the distance between $e$ and $f$ outside the critical point.
To the best of our knowledge, this has only been proved for $q\geq 1$, in the subcritical phase: this is for example a consequence of a result of Harel and Spinka \cite{HarelSpinka}; they shown the stronger fact that the model can be realised by a factor of i.i.d. process, with exponential decay for the tail of the radius of the factor. Their proofs strongly relies on monotonicity and therefore cannot be applied to the $q<1$ regime.
By self-duality, on $\bbZ^2$, it also holds in the supercritical phase but in higher dimension, like for the question of uniqueness, it is only known for $p$ close enough to 1.
However, for $q<1$, the question is entirely open, even in the cases where we have a comparison with a sub- or super-critical Bernoulli percolation.

\begin{question}
	Do we have exponential decay of the covariance function when $\max(p,p') < p_c(G)+\delta'$ or $\min(p,p')>p_c(G)-\delta$?
\end{question}
Note that in the case of domination by a subcritical percolation, we immediately get the exponential decay of the connectivity function, that is, the probability of two vertices being connected under $\phi_{p,q}$. However, it is not clear that this implies exponential decay of the covariance.
Much more is known for $q\geq 1$, since we have in fact exponential decay of the connectivity function in the whole subcritical phase \cite{DRT}.

\bigskip
We now consider the question of the uniqueness of the infinite cluster on $\bbZ^d$ (on non-amenable graphs, it is easy to get FK measures with more than one infinite cluster, even notably for $q=1$). This is linked with the existence of non-translation-invariant FK measures.
Indeed, we have already mentioned the result of Burton and Keane \cite{BurtonKeane}, which guarantees that on $\bbZ^d$ (as well as any other amenable lattice), translation-invariant (TI) FK measures cannot have more than one infinite cluster.
In dimension 2, it is widely believed that non-TI measures do not exist, but it is in general hard to exclude formally their existence. The celebrated Aizenman--Higuchi theorem for the 2D Ising model has been extended in \cite{CDIV}, where it is proved that for all $q\in \bbN^*$ the $q$-state Potts model admits exactly $q$ extremal Gibbs measures in the supercritical phase, implying the non-existence of non-TI measures. But all these measures correspond to the same FK measure, and furthermore, we have already observed that the coexistence of different FK measures can occur only at $p_c(q)$ (if $q\geq1)$.
For $q\in[1,4]$, continuity of the phase transition \cite{DST} implies that there is a unique FK (or Potts) measure at the critical point, and this measure is translation-invariant, but the structure of the set of Gibbs measures is not fully understood for $q>4$.

However, in dimension $d \geq 3$, it is known that at $p=p_c(q)$ for large enough $q$, there exist non-TI FK measures (see \cite{CernyKotecky} or Theorem 7.35 in \cite{grimmett2006random}).
The construction in \cite{CernyKotecky} produces an extremal non-TI FK measure that has a.s. one infinite cluster. It would be surprising if a measure in the $q\geq 1$ regime could produce more than one infinite cluster, but it appears that this has not been formally ruled out.
Once again, the situation is much less well understood in the regime $q \in (0,1)$, where nothing is known about non-TI FK measures (beyond the trivial fact that they do not exist in the uniqueness phase).

\begin{question}
	Is it true that for $q>0$ and $p\in[0,1]$, every $\phi \in \operatorname{FK}(p,q)$ has at most one infinite cluster almost surely?
\end{question}

It is plausible that being dominated by a supercritical Bernoulli percolation (thus containing a.s. one infinite cluster) helps to prove uniqueness of the infinite cluster in the FK measure.

A negative answer would be somehow less unlikely in the $q<1$ regime.
In fact, interesting behaviors arise and are rigorously proven in the $q$ goes to 0 limit.
Besides translation-invariance, a key ingredient in the proof of Burton--Keane is the finite-energy property\footnote{A measure $\phi$ on $\{0,1\}^E$ is said to have the finite-energy property if a.s. $\phi\left(\omega(e)=1 \mid \omega_{\langle e\rangle}\right) \in (0,1)$ for all $e \in E$. It holds for all FK measures if and only if $p,p' \in (0,1)$, or equivalently $q\in(0, \infty)$ and $p'\in(0,1)$.}, which is lost in this limit.
We thus observe the appearance of measures, described below, with infinitely many infinite clusters.
Containing infinitely many infinite clusters seems to be a specific feature of models lacking the finite-energy property, but it is not clear how these measures at $q=0$ behave when $q$ is increased slightly above 0.

\bigskip

For all $\beta>0$, the $\beta$-\emph{arboreal gas} is a model of random spanning forest, defined as the limit when $q \to 0$ with $p=\beta q$ of the random cluster model. Equivalently, with our notations, it corresponds to $p=0$ and $p'=\beta/(1+\beta)>0$, that is the bottom line of Figures \ref{fig:phase_diag_Z2} and \ref{fig:phase_diagram_general}.
It is known that on $\bbZ^2$, the arboreal gas (for any $\beta>0$) never contains an infinite tree \cite{BCHS_2D}, while it does on $\bbZ^d, \ d\geq3$ for sufficiently large $\beta$ \cite{BCH_3D}. It strengthens the conjecture that in dimension 2 the critical line of FK percolation begins with the point $(p,p')=(0,1)$, whereas in dimension $d\geq3$, it begins with $(0,\beta_c/(1+\beta_c))$, $\beta_c$ being the critical parameter of the arboreal gas.
In \cite{HalberstamHutchcroft}, a Burton--Keane like statement is proved for the arboreal gas in dimension $d \leq 4$; namely, it is proved that any TI arboreal gas has a.s. at most one infinite tree.
This matches a well-known result of Pemantle \cite{Pemantle} for the \emph{uniform spanning tree} on $\bbZ^d, \ d\leq 4$, which is the $\beta \to \infty$ limit of the arboreal gas, or equivalently the FK model at $(p,p')=(0,1)$.
However, in dimension $d\geq5$, the uniform spanning tree contains a.s. infinitely many infinite trees, and this behavior is conjectured to occur in the arboreal gas for large enough $\beta$ as well.
Pemantle proved in \cite{Pemantle} that the free and wired uniform spanning tree measures always coincide on $\bbZ^d$, leading to the uniqueness of the infinite-volume measure. For the arboreal gas, similarly to the FK percolation with $q<1$, it is not known whether the free and wired measures are well-defined, and it is stressed by Halberstam and Hutchcroft \cite{HalberstamHutchcroft} that ``the structure of the set of Gibbs measures for the arboreal gas is very poorly understood''.

\begin{question}
	Is there a unique arboreal gas measure on $\bbZ^d$ for all $\beta>0$?
\end{question}
Uniqueness of the Gibbs measure is only known for $\beta$ small enough so that the model is dominated by a subcritical percolation, that is when $\beta \leq p_c(\bbZ^d)/(1-p_c(\bbZ^d)).$
Note that Theorem \ref{thm:subcritical_strong_uniqueness} also extends by some $\delta>0$ this range of uniqueness for the arboreal gas.

\printbibliography
\newpage

\end{document}